\newtheorem{theorem}{Theorem}
\newtheorem{lemma}[theorem]{Lemma}
\newtheorem{remark}{Remark}
\newtheorem{property}{Property}
\newtheorem{definition}{Definition}
\title{Micro-macro Parareal, from ordinary differential equations to stochastic differential equations and back again}
\author{Ignace Bossuyt 
\thanks{Department of Computer Science,
Celestijnenlaan 200 A, box 2402,
3001 LEUVEN, Belgium.  
\protect\url{mailto: ignace.bossuyt1@kuleuven.be},
\protect\url{https://orcid.org/0000-0003-2066-400X}}
\and Stefan Vandewalle
\thanks{Department of Computer Science,
Celestijnenlaan 200 A, box 2402,
3001 LEUVEN, Belgium.  \protect\url{mailto: stefan.vandewalle@kuleuven.be} ,
\protect\url{https://orcid.org/0000-0002-8988-2374}}
\and Giovanni Samaey
\thanks{Department of Computer Science,
Celestijnenlaan 200 A, box 2402,
3001 LEUVEN, Belgium.  \protect\url{mailto: giovanni.samaey@kuleuven.be} ,
\protect\url{https://orcid.org/0000-0001-8433-4523}}
}
\newcommand{\norm}[1]{\left\|#1\right\|}
\NewDocumentCommand{\mycite}{o m}{\citeauthor{#2} \IfValueTF{#1}{\cite[#1]{#2}}{\cite{#2}}}
\begin{document}

\maketitle

\begin{abstract}
We are concerned with the micro-macro Parareal algorithm for the simulation of initial-value problems. 
In this algorithm, a coarse (fast) solver is applied sequentially over the time domain, and a fine (time-consuming) solver is applied as a corrector in parallel over smaller chunks of the time interval. 
Moreover, the coarse solver acts on a reduced state variable, which is coupled to the fine state variable through appropriate coupling operators.
We first provide a contribution to the convergence analysis of the micro-macro Parareal method for multiscale linear ordinary differential equations (ODEs). 
Then, we extend a variant of the micro-macro Parareal algorithm for scalar stochastic differential equations (SDEs) to higher-dimensional SDEs.

\emph{2020 MSC codes:} 65L11, 34E13, 65C30, 68Q10, 65C35, 60H35; 

\emph{Keywords:} Parallel-in-time; Parareal; multiscale; McKean-Vlasov SDE; micro-macro; moment model; reduced model.
\end{abstract}


\section{Introduction}
Parallel-in-time methods aim to pararallelise the time-simulation of initial-value problems over the time domain.
In the Parareal algorithm, first proposed by \mycite{lions_resolution_2001_AMS}, a fine time-propagator (accurate but with long wall-clock time) is applied on small chuncks of the time domain in parallel. A correction is provided by a coarse time-propagator (fast but with reduced accuracy), that is applied sequentially over the complete time domain.

More specifically, let $u(t): \mathbb R \rightarrow \mathbb R^d$ be a function of time $t$, let $f :\mathbb R^d \rightarrow \mathbb R^d$ be a  function, and $\alpha \in \mathbb R^d$ be a constant vector, then the Paraeal method applies to ODE problems of the form
\begin{equation}
du/dt = f(u) \qquad u(0) = \alpha.
\label{IVP_formulation}
\end{equation} 
on a time interval $[0, \, T]$.
The time domain is divided in $N$ subintervals of equal length $\Delta t = T/N$, and the elements of the time grid are denoted with $t_n = n \Delta t$, for $0 \leq n \leq N$. 
Let $\mathcal F$ be a fine solver $\mathcal F: \mathbb R^d \rightarrow \mathbb R^d$, numerically evolving an approximate solution to equation \eqref{IVP_formulation} over a time chunk of time $\Delta t$, and similarly let $\mathcal C: \mathbb R^d \rightarrow \mathbb R^d$ be a coarse solver.

The Parareal iterate $u^k_n$ with index $k$ for iteration number $k \geq 0$ and $n$ for time index $0 \leq n \leq N-1$, obeys this double recursion:
\begin{equation}
\begin{cases} 
	u^{k}_{n+1} = \mathcal C (u^{k}_{n}),
	& \qquad k=0, \\
    u^{k+1}_{n+1} = \mathcal F(u^{k}_{n}) + \mathcal C( u^{k+1}_{n}) - \mathcal C(u^{k}_{n}), 
    & \qquad k > 0.
\end{cases}
    \label{classical_Parareal}
\end{equation}
Typically, a reference solution $u_n$ is defined as the result of the sequential application of the fine solver: $u_0 = \alpha$ and $u_{n+1} = \mathcal F(u_n)$ for $0 \leq n \leq N-1$. 
The approximations by the Parareal algorithm then possesses a finite-termination property, namely $u^k_n = u_n$ for $k \geq n$ (see Theorem 3.1 and Remark 4.7 in \mycite{gander_analysis_2007}).
In practice, Parareal is also applicable for nonautonomous problems and nonuniform time grids.

The convergence of Parareal for linear scalar differential equations is given by the following theorem by \mycite[corrolary 4.6 and theorem 4.9]{gander_analysis_2007}.
\begin{lemma}
\label{theorems_from_Gander_Vandewalle}
Let $\mathcal F(u) = fu$ be a scalar fine propagator and $\mathcal C(u) = gu$ be a scalar coarse propagator with $f,g \in \mathbb R$ and $|g|< 1$. 
Then, the maximum error $E^{k}_{\mathrm{max}} = \max_{1\leq n \leq N} |u^k_n - u_n|$, 
satisfies a superlinear bound
\begin{equation}
E^{k}_{\mathrm{max}}
\leq
\left| f-g \right|^k 
\binom{N-1}{k}
E^{0}_{\text{max}},
\label{superlinear_bound_old}
\end{equation} 
and a linear bound: 
\begin{equation}
E^{k}_{\mathrm{max}}
\leq 
\left( \frac{\left| f-g \right|}{1-\left| g \right|} \right)^k 
E^{0}_{\text{max}}.
\label{linear_bound_old}
\end{equation}
\end{lemma}

In micro-macro Parareal, which is a generalisation of Parareal, the coarse propagator does not act on the original state variable $u \in \mathbb R^d$ (micro state), but instead on a reduced version $U \in \mathbb R^r$ (macro state). 
These micro and macro states are coupled.
A restriction operator $\mathcal R: \mathbb R^d \rightarrow \mathbb R^r$ extracts macro information from a micro state.
A lifting operator $\mathcal L$ provides a unique micro state $u$, consistent with a given macro state $U$. That is, $\mathcal{L}: \mathbb R^r \rightarrow \mathbb R^d: u = \mathcal L(U)$ such that $\mathcal R(u) = U$.

\mycite{blouza_parallel_2010} proposed one possible micro-macro Parareal algorithm (see also \mycite[Equation 3.21]{Legoll2013}). 
The micro initial condition for all iterations $k$ is $u^k_0 = u_0$ and the macro initial condition equals $U^k_0 = \mathcal R (u_0)$.
In the zeroth iteration, one computes $U^0_{n}$ and $u^0_{n}$ sequentially for $n=1 \hdots N$:
\begin{equation}
\begin{aligned} 
U_{n+1}^{0}  &= \mathcal{C} (U_n^{0}), \\ 
u_{n+1}^{0} &= \mathcal{L}(U_{n+1}^{0}),
\end{aligned}
\label{mM_parareal_maday_0}
\end{equation}
In the subsequent iterations:
\begin{equation}
\begin{aligned}
u_{n+1}^{k+1} 
&=  
\mathcal{F} (u_n^k)
+
\mathcal L \left [ \mathcal{C} (\mathcal{R} (u_n^{k+1}) ) 
- 
\mathcal{C}( \mathcal{R} ( u_n^k) ) 
\right] \\
U^{k+1}_{n+1} 
&= 
\mathcal R (u^{k+1}_{n+1})
\end{aligned}
\label{mM_parareal_maday_other_k}
\end{equation}
Depending on the context, however, a lifting operator may involve costly computations.  
In that case, a matching operator can be used instead. 
A matching operator $\mathcal M$ outputs a micro state $u$ consistent with a given macro state $U$, based on prior information about the micro state $\hat u$, that is, $\mathcal M: (\mathbb R^r ,\, \mathbb R^d) \rightarrow \mathbb R^d: \mathcal M(U, \hat u) = u$ such that $\mathcal R(u) = U$. 

Another micro-macro Parareal algorithm, using a matching operator, was proposed by \mycite[Algorithm 2]{Legoll2013}. 
As in the previous algorithm, the micro initial condition for all iterations $k$ is $u^k_0 = u_0$ and the macro initial condition equals $U^k_0 = \mathcal R (u_0)$.
It starts with \eqref{mM_parareal_maday_0}, and replaces \eqref{mM_parareal_maday_other_k} by
\begin{equation}
\begin{aligned}
U_{n+1}^{k+1} 
&= \mathcal{C} (U_n^{k+1}) 
+ \mathcal{R} ( \mathcal{F} (u_n^k))
- \mathcal{C}( U_n^k) \\     
u_{n+1}^{k+1} &= \mathcal{M}(U_{n+1}^{k+1}, \mathcal{F} (u_n^k)).
\end{aligned}
\label{mM_parareal_other_k}
\end{equation}
When we refer to micro-macro Parareal in the rest of this text, we refer to this variant.

\begin{remark}[Micro and macro error]
In the study of convergence properties of the micro-macro Parareal algorithm, one may be interested in the error on the micro variable $u_n^k$ or in the error on the macro variable $U_{n}^k$. 
We denote these errors $e^{u,k} = u_{n}^k - u_n$ and $e^{U,k} := U_{n}^k - U_n = U^k_n - \mathcal{R}(u^k_n)$, respectively.
In practice, this choice is influenced by the context. 
See also remark \ref{remark_impossible_everything_in_terms_of_macro_error}.
\end{remark}

\begin{property}
\label{properties_micro_macro_Parareal}
If $\mathcal R$ and $\mathcal M$ are chosen such that, for any $U$ and $u$, $\mathcal R (\mathcal M(U, u)) = U$, then the following properties of micro-macro Parareal \eqref{mM_parareal_maday_0}-\eqref{mM_parareal_other_k} hold:

\begin{enumerate}[(i)]
\item A micro-macro consistency property, that is $U^k_n = \mathcal R (u^k_n)$, $\forall n,k \geq 0$ (see \mycite[last paragraph of section 3.2]{Legoll2013}).

\item If, in addition, $\mathcal M ( \mathcal R (u), u) = u$, then, a finite termination property holds: $u^k_n = u_n$ for all $k \geq n$ (see \mycite[Theorem 12]{Legoll2013}). From (i), it also holds that $U^k_n = U_n = \mathcal R (u_n)$. 
\end{enumerate} 
\end{property}

\noindent
Under the same assumptions of property \ref{properties_micro_macro_Parareal}, it holds that
\begin{enumerate}[(i)]
\item $e^{U,k}_n = \mathcal R (e^{u,k}_n)$, $\forall n,k \geq 0$,
\item $e^{u,k}_n = 0$ and $e^{U,k}_n = 0$  for all $k \geq n$.
\end{enumerate}

\paragraph{Overview of paper.}
The rest of this paper is organised as follows.
We first present a convergence bound for the micro-macro Parareal algorithm applied to a special multiscale ODE in section \ref{section_micro_macro_ODE}. 
We present a new upper bound on the Parareal error, explicitly with respect to the iteration number. This bound is valid for a specific class of (multiscale) ODEs.

Then, in section \ref{section_micro_macro_SDE}, we extend the MC-moments Parareal algorithm by \mycite{bossuyt_monte_carlomoments_2023}, designed for the simulation of scalar (McKean-Vlasov) stochastic differential  equations (SDEs) to higher dimensional SDEs.

In both parts, we provide some numerical experiments to illustrate our theory and methods. 
The Julia implementation for all the numerical experiments is available in a public repository from \cite{ignace_software_paper_ANZIAM}.

\section{Convergence theory of Micro-macro Parareal for linear multiscale ODEs}
\label{section_micro_macro_ODE}
In this section, we consider a two-dimensional 
linear multiscale ODE

\paragraph{Test system and assumptions.}
The ODE that we consider models the evolution of a state vector $u = (x,y)$ where $x \in \mathbb R$ is a slow variable and $y \in \mathbb R$ is a fast variable. 
It contains the parameters $\alpha$, $\beta, \delta \in \mathbb R$. 
\begin{equation}
\begin{aligned}
\frac{d}{dt} 
\left[ 
\begin{matrix}
x \\ y
\end{matrix}
\right] 
= 
\left[ 
\begin{matrix}
\alpha & \beta \\
0 & \delta	
\end{matrix}
\right]
\left[ 
\begin{matrix}
x \\ y
\end{matrix}
\right] 
\qquad 
\qquad
&
\left[ 
\begin{matrix}
x(0) \\ y(0)
\end{matrix}
\right],
= 
\left[ 
\begin{matrix}
x_0 \\ y_0
\end{matrix}
\right].
\end{aligned}
\label{multiscale_ODE}
\end{equation}
Here, we assume that $\delta$ is large compared to $\alpha$: $|\delta| \gg \alpha$ such that the fast variable reaches its equilibrium much faster in time than the slow variable.
We assume that $\delta < 0$ and also that $\delta \neq \alpha$. This ensures that the fast variable decays to zero.
If one increases the absolute value of $\delta$, the numerical simulation of this system gets more  expensive due to the stiffness in the $y$ variable. 

The (exact) solution to equation \eqref{multiscale_ODE} equals $u(t+\Delta t) = \mathcal A(\Delta t) u(t)$
where 
\begin{equation}
\mathcal A (\Delta t) 
=
\left[
\begin{matrix}
e^{\alpha \Delta t} & (e^{\delta \Delta t} - e^{\alpha \Delta t}) \frac{\beta}{\delta - \alpha} \\
0 & e^{\delta \Delta t}
\end{matrix}
\right].
\label{definition_mathcal_A}
\end{equation}
This is a special case of the linear ODE considered in \mycite{Legoll2013}, namely a two-dimensional case where the leftmost entry in the matrix $\mathcal A$ equals zero.

\paragraph{Reduced model.}
Now we build a one-dimensional reduced model, describing only the dynamics of a slow variable $U$ (as an approximation to $x$). 
To this aim, we introduce a modified decay rate $\bar{\alpha} \approx \alpha$. 
The evolution of the reduced variable $U$ is given as follows:
\begin{equation}
\frac{dU}{dt} = \bar \alpha U ,
\qquad \qquad
U(0) = \bar{x}_0.
\label{reduced_ODE}
\end{equation}
where $\bar{x}_0$ is the initial condition of the reduced model. We denote the effect of a numerical time-stepper on equation \eqref{reduced_ODE} with a scalar $G \in \mathbb R$ as $G \approx e^{\alpha \Delta t}$.



\begin{definition}[Operators for micro-macro Parareal on linear multiscale ODEs]
\label{definition_micro_macro_Parareal_ODEs}
Following \mycite{Legoll2013}, we choose the micro variable $u = ( x, \, y )$ (corresponding to the multiscale ODE \eqref{multiscale_ODE}) and the macro variable $U$ (corresponding to the reduced model \eqref{reduced_ODE}). Further, we choose

\begin{itemize}
\item a fine propagator $\mathcal F: 
\mathbb R^2 \rightarrow \mathbb R^2: 
\mathcal F(u) := \mathcal A u$ (see the stiff model \eqref{multiscale_ODE}) and 

\item a coarse propagator $\mathcal C: 
\mathbb R \rightarrow \mathbb R: 
\mathcal C(U) := GU$ (see the reduced model \eqref{reduced_ODE}),

\item a restriction operator
$ \mathcal R: 
\mathbb R^2 \rightarrow \mathbb R: 
\mathcal R ((x, \, y))
:= x$, 

\item a matching operator  
$ \mathcal M: 
(\mathbb R, \mathbb R^2) \rightarrow \mathbb R^2:
\mathcal M(U, (x, \, y)) :=
(U, \, y)$

\item a lifting operator 
$ \mathcal L: 
\mathbb R \rightarrow \mathbb R^2:
\mathcal L(U) := 
(U, \, 0) $.
\end{itemize}
We divide the time-interval into $N$ subintervals of equal length.
\end{definition}
The coarse and fine solver are linear, as the ODE problem and its reduced model are linear.
The above choices suffer from the issue of initial slip, for more information see for instance \mycite[Appendix A]{van_kampen_elimination_1985}. 
We postpone a discussion of this issue to section \ref{convergence_bound_numerical_experiment}, where we also give some numerical experiments.

\paragraph{Relation to existing work.}
\mycite{Legoll2013} study the convergence of micro-macro Parareal with respect to the time-scale separation between the fast and slow variable for a more general multiscale linear ODE than the ODE \eqref{multiscale_ODE}, i.e., with a bottom left term of the matrix $\mathcal A$ possibly different from zero. 
Their bound contains a constant which depends on the iteration number. That makes it difficult to predict the error of micro-macro Parareal \textit{a priori} as a function of the iteration number. 
In this work, we wish to obtain a convergence bound directly with respect to the iteration number. 

\subsection{A non-tight error bound}
In this section, we first interpret micro-macro Parareal for the multiscale ODE as a special case of classical Parareal.
We then explain why it is not possible to readily apply the convergence theory from \mycite{gander_analysis_2007}, because the coarse and fine propagators do no commute.
Then we derive a bound on the error of micro-macro Parareal based on the generating functions methodology developed by \mycite{gander_unified_2023}. 

\paragraph{Preparatory considerations}

\begin{definition}[Introduction of symbols]
In order to make notation shorter, we introduce $F = e^{\alpha \Delta t}$
and $b = (e^{\delta \Delta t} - e^{\alpha \Delta t}) \frac{\beta}{\delta - \alpha}$.
We also define the matrix $\mathcal B$ as follows:
\begin{equation}
\mathcal B 
= \left[ \begin{matrix}
G & 0 \\
0 & 0
\end{matrix} \right].
\label{definition_mathcal_B}
\end{equation}
\end{definition}

We first present a lemma about the interpretation of the micro-macro Parareal iterates as iterates of classical Parareal, with proof in appendix \ref{proof_micro_macro_error}.
\begin{lemma}[Interpretation of micro-macro Parareal as classical Parareal]
\label{solution_micro_macro_Parareal}
The use of micro-macro Parareal \eqref{mM_parareal_maday_0}-\eqref{mM_parareal_other_k} with the operators from definition \ref{definition_micro_macro_Parareal_ODEs}
is equivalent to the application of classical Parareal with a fine propagator $\mathcal F: \mathbb R^2 \rightarrow \mathbb R^2: \mathcal F(u) = \mathcal Au$ and a coarse  propagator $\mathcal C: \mathbb R^2 \rightarrow \mathbb R^2: \mathcal C(u) = \mathcal Bu$:
\begin{equation}
\begin{aligned}
u^0_{n+1} 
&= 
\mathcal B u^0_n 
\\
u^{k+1}_{n+1} 
&=
(\mathcal A - \mathcal B)
u^{k}_{n}
+ 
\mathcal{B} u^{k+1}_n.
\end{aligned}
\label{micro_macro_Parareal_as_classical}
\end{equation}
\end{lemma}

\begin{lemma}[Error equation for micro-macro Parareal iterates]
\label{error_micro_macro_Parareal}
Consider micro-macro Parareal, defined in
equations \eqref{mM_parareal_maday_0}-\eqref{mM_parareal_other_k}, with coarse and fine time-stepping operators and coupling operators from definition \ref{definition_micro_macro_Parareal_ODEs}.
The error on the micro state $u^k_n$ in iteration $k$ and at time point $t_n$, namely $e^{u,k}_n = u^k_n - u_n$, satisfies for $0 \leq n \leq N-1$
\begin{equation}
\begin{aligned}
e^{u,0}_{n+1} 
&= B u^0_n - A u_n = B^nu_0 - A^n u_0
\\
e^{u,k+1}_{n+1} 
&=
(\mathcal A - \mathcal B)
e^{u,k}_{n}
+ 
\mathcal{B} 
e^{u,k+1}_n.
\end{aligned}
\label{error_equation_micro_macro_Parareal}
\end{equation}
\end{lemma}
\begin{proof}
Substracting the reference solution $u_n = \mathcal A^n u_0$ on both sides of equation \eqref{micro_macro_Parareal_as_classical} leads to \eqref{error_equation_micro_macro_Parareal}.
\end{proof}

We denote the error on the slow variable with $e^{x,k}_n = x^k_n - x_n$ and the error on the fast variable with $e^{y,k}_n = y^k_n - y_n$.

\begin{remark}
\label{remark_impossible_everything_in_terms_of_macro_error}
It is, in general, not possible to derive a closed form solution to equation for the evolution of the macro error. 
Indeed, using the linearity of $\mathcal R$
\begin{equation}
\begin{aligned}
e^{U,k+1}_{n+1} 
= 
\mathcal R \left(
e^{u,k+1}_{n+1} \right) 
&=
\mathcal R \left(
(\mathcal A - \mathcal B)
e^{u,k}_{n}
+ 
\mathcal{B} 
e^{u,k+1}_n
\right) 
\\
&=
\mathcal R \left(
(\mathcal A - \mathcal B)
e^{u,k}_{n}
\right)
+ 
\mathcal R \left(
\mathcal{B} 
e^{u,k+1}_n
\right)
\\
&=
\mathcal R \left(
(\mathcal A - \mathcal B)
e^{u,k}_{n}
\right)
+ 
G
e^{U,k+1}_n.
\end{aligned}
\end{equation}
For our model problem, $\mathcal R \left(
(\mathcal A - \mathcal B)
e^{u,k}_{n}
\right) \neq (\mathcal A - \mathcal B) \mathcal R \left(
e^{u,k}_{n}
\right)$, thus it is not possible to write an expression for the evolution of the macro error in terms of the macro error alone.
\end{remark}

\paragraph{Summary of derivation of convergence bounds from \mycite{gander_analysis_2007}.}
A first approach to obtain an upper bound on the error $e^{u,k}_n$ could be to generalise the analysis for a linear scalar test ODE used by \mycite{gander_analysis_2007} to multidimensional linear ODEs. 
In this approach, we define the vector 
$
\mathbf e^{u,k} := \left[
\begin{matrix} 
e^{u,k}_1 & \hdots & e^{u,k}_N 
\end{matrix} \right]$.

Let us now define the matrices $M \in \mathbb R^{2N \times 2N}$ and $C \in \mathbb R^{2N \times 2N}$ as 
\begin{equation}
M = \left[ 
\begin{matrix} 
I & 0 & \hdots & 0 & 0 \\ 
-\mathcal B & I & \hdots & 0 & 0 \\
\hdots \\
0 & -\mathcal B & \hdots & I & 0 \\
0 & 0 & \hdots & -\mathcal B & I \\
\end{matrix}
\right];
\quad 
C = \left[ 
\begin{matrix}
0 & 0 & \hdots & 0 & 0 \\ 
\mathcal{A-B} & 0 & \hdots & 0 & 0 \\
0 & \mathcal{A-B} & \hdots & 0 & 0 \\
\hdots & & & & \hdots \\
0 & 0 & \hdots & \mathcal{A-B} & 0 \\
\end{matrix}
\right]
\label{definition_matrices_M_and_C}
\end{equation}
Then, $\mathbf e^{u,k+1}$ is related to $\mathbf e^{u,k}$ through this equation: 
\begin{equation}
\begin{aligned} 
M \mathbf e^{u,k+1} &= C \mathbf e^{u,k}, \mathrm{thus}
\\
\mathbf e^{u,k+1} &= M^{-1}C \mathbf e^{u,k}.
\end{aligned}
\end{equation}

\begin{lemma}
\label{lemma_commuting_matrices}
If $\mathcal B$ and $\mathcal A- \mathcal B$ commute, then 
the matrices $M^{-1}$ and $C$ commute.
Furthermore, let us define the matrices $H \in \mathbb R^{2N \times 2N}$ and $D \in \mathbb R^{2N \times 2N}$:
\begin{equation}
H = \left[ 
\begin{matrix} 
0 & 0 & \hdots & 0 & 0 \\ 
I & 0 & \hdots & 0 & 0 \\
\mathcal B & I & \hdots & 0 & 0 \\
\hdots \\
\mathcal B^{N-2} & \mathcal B^{N-1} & \hdots & I & 0 \\
\end{matrix}
\right];
\quad 
D = \left[ 
\begin{matrix}
\mathcal{A-B} & 0 & \hdots & 0 & 0 \\ 
0 & \mathcal{A-B} & \hdots & 0 & 0 \\
0 & & \hdots & 0 & 0 \\
\hdots & & & & \hdots \\
0 & 0 & \hdots & 0 & \mathcal{A-B} \\
\end{matrix}
\right].
\label{definition_matrices_N_and_D}
\end{equation}
Then, $M^{-1}C = HD$.
If the matrices $M^{-1}$ and $C$ commute, then the matrices $H$ and $D$ also commute.
\end{lemma}
\begin{proof}
See appendix \ref{appendix_proof_commuting_matrices}.
\end{proof}

\noindent
If the matrices $M^{-1}$ and $C$ commute, then it is possible to write
\begin{equation}
e^k = (M^{-1}C)^k e^0 = (HD)^k e^0 = H^k D^k e^0,
\end{equation}
and thus
\begin{equation}
\norm{e^k} 
\leq 
\norm{H^k} \norm{D^k} \norm{e^0}.
\end{equation}
The superlinear upper bound from \mycite{gander_analysis_2007}, given in lemma \ref{theorems_from_Gander_Vandewalle}, is obtained through bounding $\norm{H^k}$ and $\norm{C^k}$.

If the matrices $M^{-1}$ and $C$ do not commute, it holds that
\begin{equation}
\norm{e^k} 
\leq
\norm{H}^k \norm{D}^k.
\end{equation}
The linear bound in equation \eqref{linear_bound}, as obtained by \mycite{gander_analysis_2007}, follows from bounding $\norm{H}$ and $\norm{D}$.
In our test problem, the matrices $H$ and $C$ indeed do not commute, since the matrices $\mathcal A$ and $\mathcal B$ that appear as blocks in $M$ and $C$, do not commute. 

In conclusion, the superlinear bound requires commutativity of $H$ and $D$, while a linear bound can be derived without this assumption. It is our aim, however, to derive an effective superlinear bound for our multiscale test problem.

%

\paragraph{Using generating functions.}
An alternative idea is to use the generating function methodology used by \mycite{gander_unified_2023}. One then proceeds as follows. From equation \eqref{error_equation_micro_macro_Parareal} it can be written that 
\begin{equation}
\| e^{u,k+1}_{n+1} \| 
\leq
\| \mathcal A - \mathcal B \|
\| e^{u,k}_{n} \|
+ 
\| \mathcal{B} \| 
\| e^{u,k+1}_n \|
\label{equation_nontight_bound}
\end{equation}
where $\| \cdot \|$ denotes a vector norm or the induced matrix norm. A proper matrix norm satisfies the submultiplicativity property $\| A B \| \leq  \| A \| \| B \|$ for arbitrary matrices $A$, $B$. 
Starting from equation \eqref{equation_nontight_bound}, the generating function method delivers this bound on $\norm{e^{u,k}_n}$, taken from \mycite[equation 3.12]{gander_unified_2023}:
\begin{equation}
\| e^{u,k}_{n} \|
\leq 
\frac{\|\mathcal A- \mathcal B\|}{(k-1)!} \sum_{i=0}^{n-k} \prod_{l=1}^{k-1}(i+l) \norm{\mathcal B}^i \max_{1\leq n \leq N} \norm{e^{u,0}_n}.
\label{equation_nontight_bound_result}
\end{equation}
A bound for $e^{x,k}_{\mathrm{max}} = \max_{1 \leq n \leq N} |x^k_n - x_n|$ can be constructed by writing 
$e^{x,k}_{\mathrm{max}} = \max_{1 \leq n \leq N} |e^{x,k}_n| \leq \norm{e^{u,k}_{\mathrm{max}}}$.
This bound is not tight, however, as is illustrated in Section \ref{convergence_bound_numerical_experiment}.

\subsection{Exploiting the structure of the multiscale ODE to obtain tighter covergence bounds}
In this section, we derive an upper bound for $e_{\mathrm{max}}^{x,k}$, which exploits the special structure of the linear multiscale ODE \eqref{multiscale_ODE}.
In this analysis, we first reformulate micro-macro Parareal for the multiscale ODE \eqref{multiscale_ODE} as classical Parareal for a scalar nonhomogeneous ODE.
This allows us to extend the convergence analysis originally presented by \mycite{gander_analysis_2007}.
First we provide a lemma on the convergence of the (autonomous) fast variable.

\begin{lemma}[Error of the fast variable]
\label{lemma_error_fast_variable}
Let $e^{y,k}_{\mathrm{max}} = \max_{1 \leq n \leq N} |y^k_n - y_n|$ be the maximum error (over all $0 \leq n \leq N$) in the micro-macro Parareal approximation (equations \eqref{mM_parareal_maday_0}-\eqref{mM_parareal_other_k}) of the fast variable, with the operators defined in definition \ref{definition_micro_macro_Parareal_ODEs}.
Then,
\begin{equation}
e^{y,k}_{\mathrm{max}} 
\leq 
e^{k \delta \Delta t} 
e^{y,0}_{\mathrm{max}}.
\label{error_fast_variable}
\end{equation}
\end{lemma}

\begin{proof}
From equation \eqref{error_equation_micro_macro_Parareal} it holds  that 
\begin{equation}
\begin{aligned}
e^{y,k+1}_{n+1} 
&= y^{k+1}_n  - y_{n+1}\\
&= e^{\delta \Delta t} y^{k}_{n} - e^{\delta \Delta t} y_{n} \\
&= e^{\delta \Delta t} e^{y,k}_{n}.
\end{aligned}
\label{recursion_error_fast_varialbe}
\end{equation}
\paragraph{Algebraic proof.}
The solution of the recursion \eqref{recursion_error_fast_varialbe} equals
\begin{equation}
e^{y,k}_{n} = 
\begin{cases}
e^{k (\delta \Delta t)} e^{y,0}_{n-k}  & 
	\qquad n \geq k \\
0 & \qquad n < k. \\	
\end{cases}
\end{equation}
Then taking the maximum over $n$ leads to equation \eqref{error_fast_variable}.

\paragraph{Equivalent proof in matrix notation.}
Let $L \in \mathbb R^{N \times N}$ be a matrix and let 
$\mathbf e^{y,k} 
= 
\left[ \begin{matrix} e_1^{y,k} & \hdots & e_N^{y,k} \end{matrix} \right]^T$, then we can write
\begin{equation}
\mathbf e^{y,k+1}
= 
\underbrace{
\left[ 
\begin{matrix}
0 & \hdots & \\
e^{\delta \Delta t} & 0 &  & \\
\vdots & & & \\
& & & e^{\delta \Delta t} & 0 & \\
0 & \hdots & & & e^{\delta \Delta t} & 0 
\end{matrix} 
\right]
}_{L}
\mathbf e^{y,k}.
\label{equation_fast_error_matrix_notation}
\end{equation}
The solution of this recursion equals $e^{y,k} = L^{k}e^{y,0}$.
Taking the maximum norm on both sides leads to equation \eqref{error_fast_variable}.
\end{proof}


The linear bound \eqref{error_fast_variable} for the error on the fast variable does not capture the property that after $N$ iterations, it is equal to zero.

\begin{lemma}[Linear and superlinear bound for the error in the slow variable]
\label{lemma_linear_superlinear_bound}
Assume that $\alpha < 0$ and $\delta < 0$ in the multiscale ODE \eqref{multiscale_ODE} and $|G|<1$ in the reduced model \eqref{definition_mathcal_B}.
Let $e^{x,k}_{\mathrm{max}} = \max_{1 \leq n \leq N} |x^k_n - x_n|$ be the error in the micro-macro Parareal approximation (equations \eqref{mM_parareal_maday_0}-\eqref{mM_parareal_other_k}) of the slow variable, with the operators defined in definition \ref{definition_micro_macro_Parareal_ODEs}.

Then, $e^{x,k}_{\mathrm{max}}$ satisfies a linear bound: \begin{equation}
e^{x,k}_{\mathrm{max}}
\leq 
\left( \frac{\left| F-G \right|}{1-\left| G\right|} \right)^k 
e^{x,0}_{\mathrm{max}}
+
\frac{b}{1-\left|G \right|}
\sum_{i=0}^{k-1} 
\left( \frac{|F-G|}{1-|G|} \right)^i 
e^{\delta (k-1-i) \Delta t} e^{y,0}_{\mathrm{max}}.
\label{linear_bound}
\end{equation}
It also satisfies a superlinear bound
\begin{equation}
\begin{aligned}
e^{x,k}_{\mathrm{max}}  
&\leq
\left| F-G \right|^k 
\binom{N-1}{k}
e^{x,0}_{\mathrm{max}} \\
&+
b
\frac{1-|G|^{N-1}}{1-|G|}
\sum_{i=0}^{k-1} 
|F-G|^i 
\binom{N-1}{i}
e^{\delta (k-1-i) \Delta t} e^{y,0}_{\mathrm{max}}.
\end{aligned}
\label{superlinear_bound}
\end{equation}
%
\end{lemma}

\begin{proof}
Equation \eqref{error_equation_micro_macro_Parareal} can be  expanded as
\begin{equation}
\left[ 
\begin{matrix}
e_{n+1}^{x,k+1} \\ e_{n+1}^{y,k+1}
\end{matrix}
\right] 
= 
\left[ 
\begin{matrix}
F & b \\
0 & d	
\end{matrix}
\right]
\left[ 
\begin{matrix}
e_{n}^{x,k} \\ e_{n}^{y, k}
\end{matrix}
\right]
-
\left[
\begin{matrix}
G & 0 \\
0 & 0	
\end{matrix}
\right]
\left[ 
\begin{matrix}
e_{x,n}^{k} \\ e_{n}^{y,k}
\end{matrix}
\right]
+ 
\left[
\begin{matrix}
G & 0 \\
0 & 0	
\end{matrix}
\right]
\left[ 
\begin{matrix}
e_{n}^{x,k+1} \\ e_{n}^{y,k+1}
\end{matrix}
\right],
\label{expanded_error_micro_macro_Parareal}
\end{equation}
where $d = e^{\delta \Delta t}$.
Thus the error on the slow variable, defined as $e^{x,k}_n = x^k_n - x_n$, satisfies
\begin{equation}
e_{n+1}^{x,k+1} = (e^{\alpha \Delta t} - e^{\bar \alpha \Delta t}) e_{n}^{x,k} + e^{\bar \alpha  \Delta t} e_{n}^{x,k+1} + b e_{n}^{y,k}.
\label{slow_error_relation}
\end{equation}
\sloppy
Let us define the matrices $M_x \in \mathbb R^{N \times N}$ and $C_x \in \mathbb R^{N \times N}$
\begin{equation}
M_x = \left[ 
\begin{matrix}
I & \hdots & \\
-G & I &  & \\
\vdots & & & \\
& & & -G &  I & \\
0 & \hdots & & & -G &  I 
\end{matrix} 
\right] ;
\qquad 
C_x = 
\left[ \begin{matrix}
0 & \hdots & & 0\\
(F-G) & 0 & & \vdots \\
\vdots & &  & \\
 &  & 0 & \vdots \\
0 & \hdots & (F-G) & 0 
\end{matrix} 
\right].
\end{equation}
The matrices $M_x$ and $C_x$ in equation \eqref{parareal_error_iteration_matrix} are not the same as the matrices $M$ and $C$ in equation \eqref{definition_matrices_M_and_C}: the latter are block matrices containing the full state space, while the former only describe the slow variable (the evolution of the fast variable is manually added in equation \eqref{parareal_error_iteration_matrix}).
Here, the matrices $M_x$ (or $M_x^{-1}$) and $C_x$ commute.

We now collect all errors at iteration $k$ at different time points $n=1 \hdots N$ in a vector, i.e. 
$\mathbf e^{x,(k)} 
= 
\left[ \begin{matrix} e_1^{x,k} & \hdots & e_N^{x,k} \end{matrix} \right]^T$ 
and 
$\mathbf e^{y,(k)} 
= 
\left[ \begin{matrix} e_1^{y,k} & \hdots & e_N^{y,k} \end{matrix} \right]^T$.
Equation \eqref{slow_error_relation} can then be rewritten as
\begin{equation}
\begin{aligned}
M_x
\mathbf e^{x,(k+1)} 
&= 
C_x
\mathbf e^{x,(k)} 
+ 
b e^{y,(k)}, \, \mathrm{thus} \\
\mathbf e^{x,(k+1)} 
&=  
M_x^{-1} C_x \mathbf \mathbf e^{x,(k)} 
+ b M^{-1}_x \mathbf e^{y,(k)}
\end{aligned}
\label{parareal_error_iteration_matrix}
\end{equation}

We now define the matrices $H_x \in \mathbb R^{N \times N}$ and $D _x\in \mathbb R^{N \times N}$ 
\begin{equation}
H_x = \left[ 
\begin{matrix} 
0 & 0 & \hdots & 0 & 0 \\ 
I & 0 & \hdots & 0 & 0 \\
\hdots \\
G^{N-1} & \hdots & I & 0 & 0 \\
G^{N-2} & G^{N-1} & \hdots & I & 0 \\
\end{matrix}
\right];
\quad 
D_x = \left[ 
\begin{matrix}
F-G & 0 & \hdots & 0 & 0 \\ 
0 & F-G & \hdots & 0 & 0 \\
0 & & \hdots & 0 & 0 \\
\hdots & & & & \hdots \\
0 & 0 & \hdots & 0 & F-G \\
\end{matrix}
\right].
\end{equation}
Then, equation \eqref{parareal_error_iteration_matrix} can be rewritten as 
\begin{equation}
\mathbf e^{x,(k+1)} 
=  
H_x D_x \mathbf e^{x,(k)} + b M^{-1}_x \mathbf e^{y,(k)}.
\end{equation}

From lemma \ref{lemma_error_fast_variable} (equation \eqref{equation_fast_error_matrix_notation}), it holds that $e^{y,(k)} = L^k e^{y,(0)}$.
The solution to the recursion in equation \eqref{parareal_error_iteration_matrix} equals, according to lemma \ref{lemma_matrix_recursion} (see appendix \ref{appendix_matrix_recursion}),
\begin{equation}
\begin{aligned}
\mathbf e^{x,(k)} 
= 
&[H_xD_x]^k 
\mathbf e^{(0)} + 
b
\sum_{i=0}^{k-1} 
[H_xD_x]^{i} 
M^{-1}_x
L^{k-1-i} 
\mathbf e^{y,(0)}.
\end{aligned}
\label{solution_matrix_recursion_in_proof}
\end{equation}
This equation is an exact formula for $e^{x,(k)}$. Next, we derive an upper bound for $\max_{1 \leq n \leq N} |e^{x,k}_n| = \norm{e^{x,(k)}}_{\infty}$.

Since $F$ and $G$ are scalars, $M^{-1}_x$ and $C_x$ commute (and, from lemma \ref{lemma_commuting_matrices}, so do $H_x$ and $D_x$), we can proceed using a similar technique as the one used by \mycite{{gander_analysis_2007}}.
%
%
\begin{itemize}
\item The linear bound in equation \eqref{linear_bound} is obtained by writing 
$\| H_x^k \|_{\infty} \leq \frac{1}{(1-|G|)^k}$ and 
$ \| C_x \|_{\infty} \leq (F-G)$ and 
$\| M^{-1}_x \|_{\infty} \leq \frac{1}{1-|G|}$ (see \mycite[proof of theorem 4.9]{gander_analysis_2007}). 

\sloppy
\item The superlinear bound in equation \eqref{superlinear_bound} is obtained by writing
$\| H^k_x \|_{\infty} 
\leq 
\binom{N-1}{k}$ and 
$ \| C_x \|_{\infty} \leq (F-G)$ and
$
\| M^{-1}_x \|_{\infty} 
=
\frac{1 -|G|^N}{1-|G|}
$
(see \mycite[lemma 4.4]{gander_analysis_2007}). 
\end{itemize}
\end{proof}

The linear and superlinear bounds \eqref{linear_bound} and \eqref{superlinear_bound} capture two effects.
For both of them, the first term equals the convergence bound from \mycite{gander_analysis_2007}, given in lemma \ref{theorems_from_Gander_Vandewalle}. It arises from a difference in $F$ and $G$, for instance due to different time-steppers for the multiscale and averaged ODEs. 
The second term captures the effect of the convergence of the fast variable on the convergence of the slow variable. 
Each of the summands in the second term is a product of a dampening factor (determined by $F$ and $G$) and the error on the fast variable at previous iterations.

In practice, the quantities $e^{x,k}_{\mathrm{max}}$ and $e^{y,k}_{\mathrm{max}}$ are unknown \textit{a priori}. 
$e^{x,k}_{\mathrm{max}}$ is also unknown when the classical bounds from \mycite{gander_analysis_2007} are used.
For the linear ODE that we consider here, one can estimate
$e^{y,k}_{\mathrm{max}} \leq |y_0|$.

\subsection{Numerical verification of the obtained convergence bounds}
\label{convergence_bound_numerical_experiment}

Our Julia implementation for the numerical simulations is available in \cite{ignace_software_paper_ANZIAM}.

\paragraph{Numerical validation of convergence bound.}
We use $\alpha = -1$. 
As an initial condition we use $x_0 = y_0 = 1$.
We consider a time interval $[0, \, T] = [0, \, 2]$, and $N = 20$ subintervals.
We let the parameter $\beta$ vary, as $\beta \in [0, \, 10^{-4}, \, 10^{-2}, \, 10^{-1}, \, 1, \, 2]$.
The fine solver is the exact solution of the multiscale ODE in equation \eqref{definition_mathcal_A}.
The coarse solver is the forward Euler method on the reduced ODE in equation \eqref{reduced_ODE} with time-step $\delta t = 0.1$.

In figure \ref{figure_new_bound_lin_and_suplin}, we display the numerically obtained error $e^{x,k} = \max_{1 \leq n \leq N} |x^k_n - x_n|$, as well as the minimum of the linear and superlinear bounds from lemma \ref{lemma_linear_superlinear_bound}.
We also show the bound based on generating functions in equation \eqref{equation_nontight_bound_result}, where we used the infinity norm for $\norm{\cdot}$.
For this simulation, the decay rate of the fast variable equals $\delta = -5$.

\begin{figure}[H]
\centering
\includegraphics[width=0.8\textwidth]{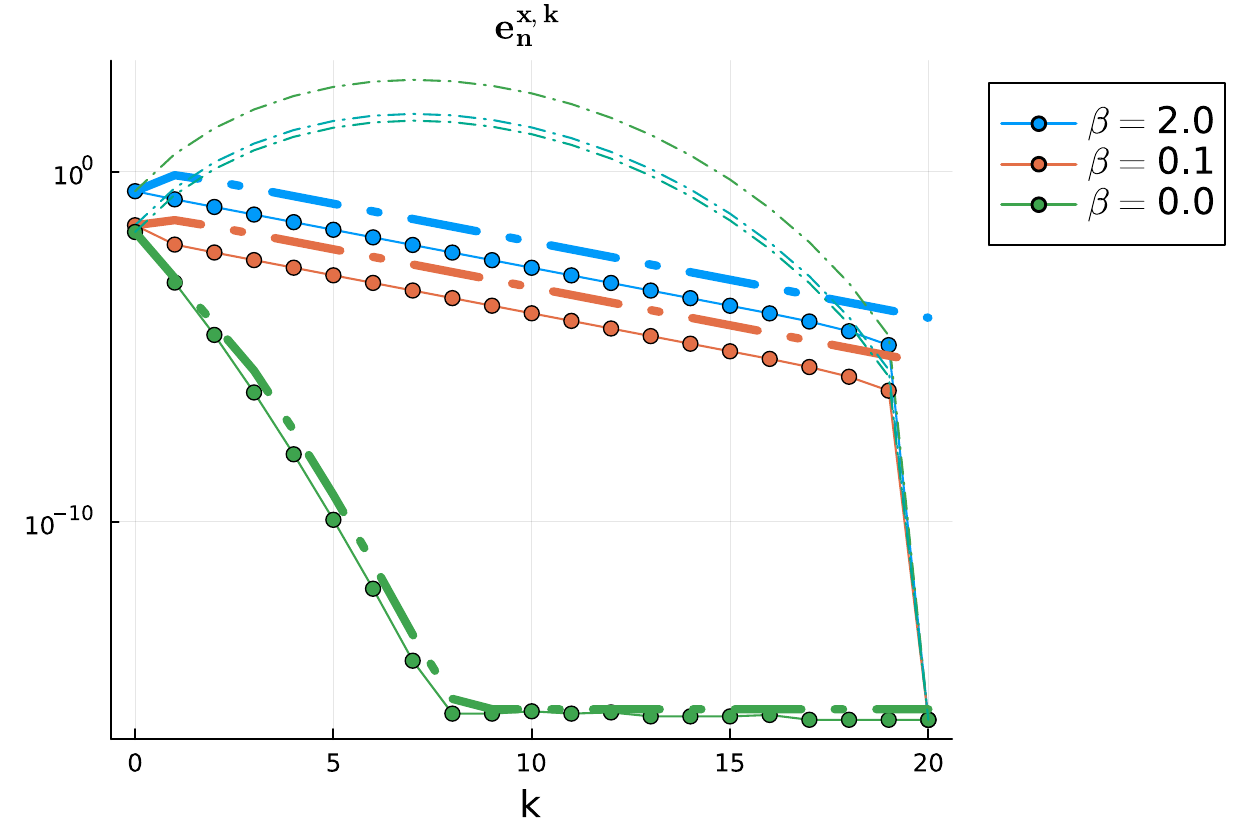}
\caption{
Convergence of micro-macro Parareal for the multiscale ODE test problem, with forward Euler as coarse solver. 
The decay rate of the fast variable equals $\delta = -5$.
Connected circles: numerical simulation, 
thin dashed line: upper bound from equation \eqref{equation_nontight_bound_result}, 
thick dash-dotted line: minimum of linear and superlinear bounds from equations \eqref{linear_bound} and \eqref{superlinear_bound}. 
Each color corresponds to another value of $\beta$.}
\label{figure_new_bound_lin_and_suplin}
\end{figure}

From figure \ref{figure_new_bound_lin_and_suplin} we observe that the new bounds from lemma \ref{lemma_linear_superlinear_bound} effectively capture the convergence, and that these are more tight than the bound \eqref{equation_nontight_bound_result}.
If $\beta = 0$, the slow variable is not influenced by the fast variable; in that case the convergence of the slow variable can be bounded using the classical upper bounds from \mycite{gander_analysis_2007} (see lemma \ref{theorems_from_Gander_Vandewalle}): the convergence speed is solely determined by the time-stepping error of the coarse propagator.
As $|\beta|$ increases, the influence of the convergence of the fast variable (which is linear, see lemma \ref{lemma_error_fast_variable}) on the error on the slow variable increases.

In figure \ref{figure_new_bound_lin_and_suplin_new_exponential}, we study a similar situation but now we artifically increase the error in the coarse solver, by choosing 
$G = e^{\bar \alpha \Delta t}$ with $\bar \alpha = 2 \alpha$, and by putting $\delta =-10$.

\begin{figure}[H]
\centering
\includegraphics[width=0.8\textwidth]{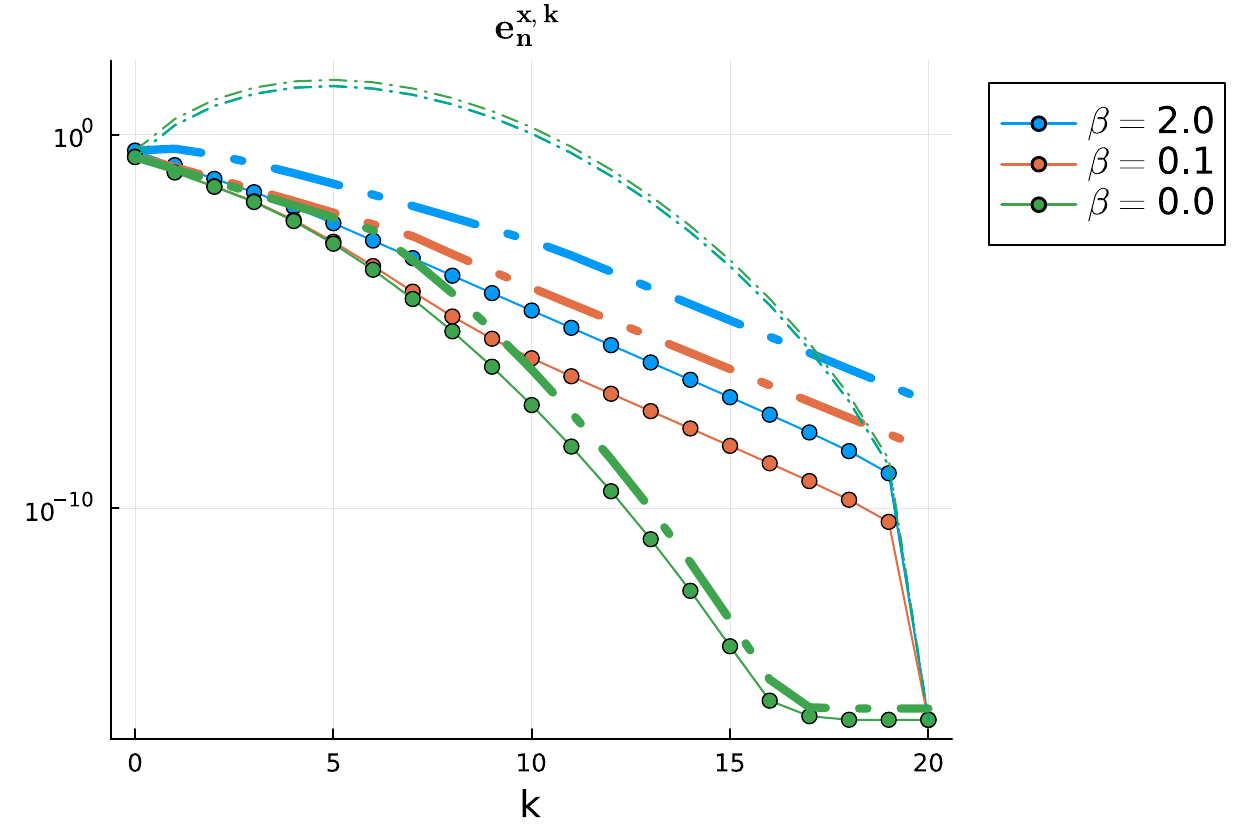}
\caption{
Convergence of micro-macro Parareal for the multiscale ODE test problem, with as coarse solver the exact solution to a modified problem with $\bar \alpha = 2 \alpha$ and $\delta =-10$.}
\label{figure_new_bound_lin_and_suplin_new_exponential}
\end{figure}

From figure \ref{figure_new_bound_lin_and_suplin_new_exponential} we observe the following.
If $\beta =0$, the error in the slow variable is solely due to the model error in the coarse solver.
As $|\beta|$ increases, initially the error decays much like $\beta=0$, but after the first few iterations a linear convergence regime sets in.
In this linear regime, the error is dominated by the convergence of the fast variable, which lemma \ref{lemma_error_fast_variable} has shown to be linear (captured in the second (sum) term of the bounds).
As the number of iterations increases, our bounds have the same slope as the numerically obtained slow errors. 

\paragraph{Avoiding initial slip.}
A proper choice of the initial condition $\bar x_0$ in equation \eqref{reduced_ODE} to avoid initial slip, following \mycite[Appendix A]{van_kampen_elimination_1985} is
\begin{equation}
\bar x_0 = x_0 - y_0 \frac{\beta}{\delta - \alpha}.
\label{better_initial_condition}
\end{equation}
This choice does affect the operators from definition \ref{definition_micro_macro_Parareal_ODEs}.
Indeed, if the restriction operator is chosen as $\mathcal R \left( (x,y) \right) = x - y \frac{\beta}{\delta-\alpha}$ and if the matching operator is the one from definition \ref{definition_micro_macro_Parareal_ODEs}, then it does not hold that $\mathcal M ( \mathcal R (u), u) = u$ for any $u$.
This would lead to a potential violation of the finite termination property (see property \ref{properties_micro_macro_Parareal}).

Alternatively, we can shift the effect of equation \eqref{better_initial_condition} into the coarse propagator, which then acts on the full micro state variable.
If
\begin{equation}
\hat{\mathcal B} 
= \left[ \begin{matrix}
e^{\bar \alpha \Delta t} & -\frac{\beta }{\delta -\alpha}e^{\bar \alpha \Delta t} \\
0 & 0
\end{matrix} \right],
\label{definition_mathcal_B_hat}
\end{equation}
then the following classical Parareal iteration describes the effect the modified initial condition from equation \eqref{better_initial_condition}: 
$$
\begin{aligned} 
u^0_{n+1} &= \hat{\mathcal B} u^0_n
\\
u^{k+1}_{n+1} 
&=
(\mathcal A - \hat{\mathcal B})
u^{k}_{n}
+ 
\hat{\mathcal{B}} u^{k+1}_n
\end{aligned}
$$.
The proof of this equality is similar to the proof of lemma \ref{solution_micro_macro_Parareal}. 
In micro-macro Parareal, this corresponds to the following choices: the micro variable $u = ( x, \, y )$ and the macro variable $U = (x, \, y)$, and 
\begin{itemize}
\item a fine propagator $\mathcal F: 
\mathbb R^2 \rightarrow \mathbb R^2: 
\mathcal F(u) := \mathcal A u$ (see the stiff model \eqref{multiscale_ODE}) 

\item a coarse propagator $\mathcal C: 
\mathbb R^2 \rightarrow \mathbb R^2: 
\mathcal C(U) := \hat{\mathcal B} U$,

\item a restriction operator
$ \mathcal R: 
\mathbb R^2 \rightarrow \mathbb R^2: 
\mathcal R (u)
:= 
u$, 

\item a matching operator  
$ \mathcal M: 
(\mathbb R^2, \mathbb R^2) \rightarrow \mathbb R^2:
\mathcal M((X,Y), (x,y)) 
:=
(X,y)$

\item a lifting operator 
$ \mathcal L: 
\mathbb R^2 \rightarrow \mathbb R^2:
\mathcal L((x,y)) 
:= 
(x, \, 0) $.
\end{itemize}

In figure \ref{figure_initial_slip} we can see that this proper choice of initial condition moderately improves the error. A theoretical analysis of this effect, however, is beyond the scope of this work.

\begin{figure}[h]
\centering
\includegraphics[width=0.8\textwidth]{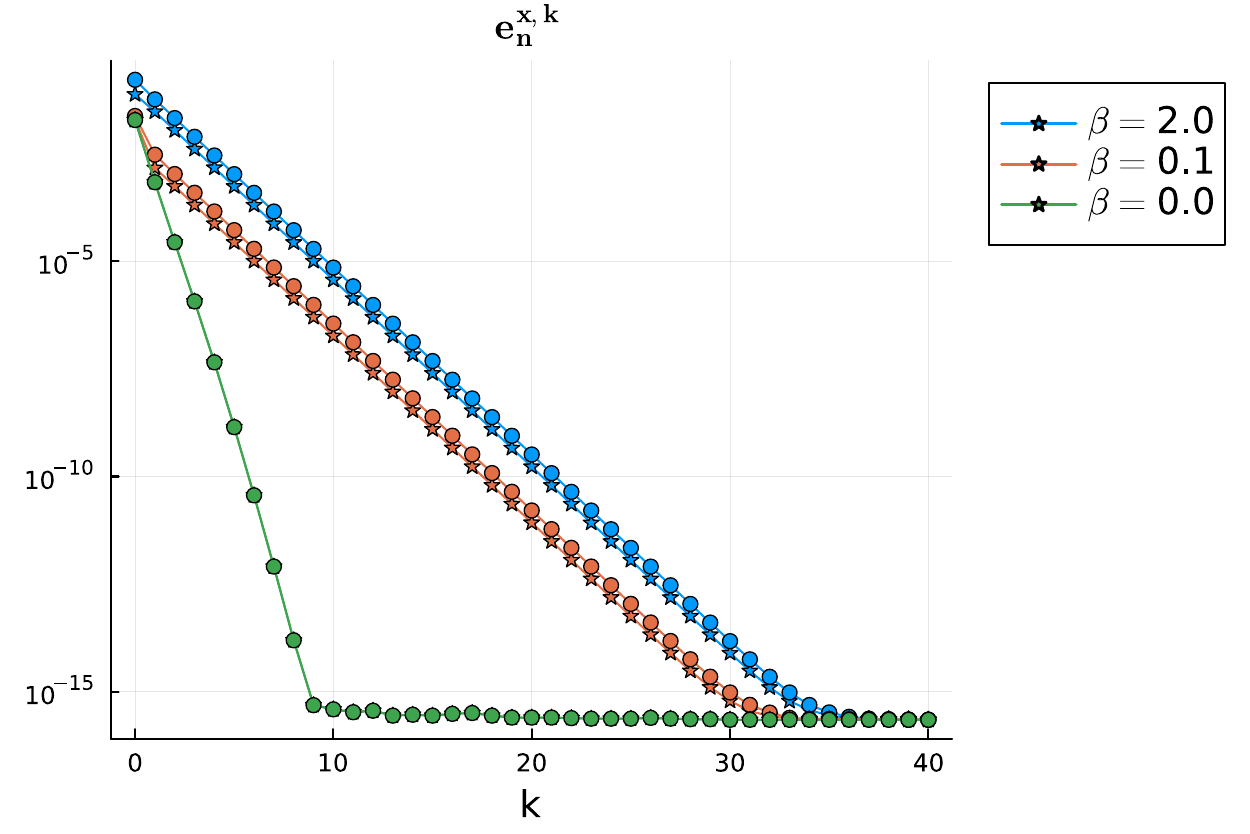}
\caption{
Avoiding initial slip: effect on the convergence.
$\alpha = -1$ and $\delta = -10$.
The star-shaped simulations are obtained with the modified procedure (using $\hat{\mathcal B}$), the filled circles use the operators from definition \ref{definition_micro_macro_Parareal_ODEs}.
}
\label{figure_initial_slip}
\end{figure}

\section{Monte Carlo-moments Parareal: extension to multidimensional SDEs}
\label{section_micro_macro_SDE}
In \cite{bossuyt_monte_carlomoments_2023}, we proposed the Monte Carlo-moments (MC-moments) Parareal method, a micro-macro Parareal algorithm for the simulation of scalar McKean-Vlasov stochastic differential equations.
In this section we extend that method to multidimensional SDEs. 

\subsection{McKean-Vlasov stochastic differential equations}
\sloppy
McKean-Vlasov SDEs model the dynamics over time $t$ of an ensemble of $P$ (coupled) particles $x^{(p)}  \in \mathbb{R}^d$, $p=1 \hdots P$.
The system is defined for some initial distribution of particles $p_0$.
Each particle $x^{(p)}$ is subject to noise from the Wiener process $W(t,\omega^{(p)}$) for noise realisations parametrised by $\omega^{(p)}$.
For notation, these particles are collected in the state variable
$\bar x = \left( x^{(1)}, \hdots, x^{(P)} \right)$.

We consider a special class of Mckean-Vlasov SDEs, where the mean-field interaction is the expected value of a function of the particles (for a more general treatment, see, e.g., \mycite{snitzman_1991}).
Let $\psi$ be some function of the collective state variable, and $\Lambda(t) = \mathbb{E}_P[\psi(\bar x(t))]$, in which the expectation $\mathbb{E}_P$ is taken over the particle ensemble $\bar x$, where each particle is one realisation of the stochastic process.
For instance, if $\psi$ is the identity, then one is interested in just the mean of the particle ensemble. 
Later we also use the operator $\mathrm{Cov}_P[\bar x]$, which computes the covariance matrix of the ensemble $\bar x$. 

This is the McKean-Vlasov SDE that we consider:
\begin{equation}
\begin{aligned}
dx^{(p)} &= 
a \left( x^{(p)}, \Lambda(t), t \right) dt
+ 
b \left(
x^{(p)}, \Lambda(t), t \right) dW^{(p)}, \\
\bar x(0) &\sim p_0, \\ 
\end{aligned}
\label{general_equation}
\end{equation}
in which the coefficient $a \in \mathbb{R}^d$ is a drift coefficient, and $b \in \mathbb{R}^{d \times m}$ is a diffusion coefficient.
We use the It\^{o} interpretation of the SDE \eqref{general_equation}.

\paragraph{Assumptions.} 
We only consider SDEs whose particle distribution is unimodal at all times $t \geq 0$.
For more information about loosening these assumptions, see \mycite{bossuyt_monte_carlomoments_2023}.

\paragraph{Simulation.}
The Monte-Carlo simulation of these equations is computationally expensive because (i) for accurate results, a lot of particles (samples) are required, and (ii) in each time-step, the mean of the particles in $\Lambda(t)$ needs to be computed.

\subsection{Moment ODEs as approximation for multidimensional SDEs}
In this section, we aim to obtain a cheap approximation to the McKean-Vlasov SDE in equation \eqref{general_equation}.

We first write a general multivariate Taylor series of the drift and diffusion coefficients for a particle $x^{(p)}$ around the mean value $\mathbb{E}_P[\bar x]$:
\begin{equation}
a(x^{(p)}, \Lambda,t) 
\approx 
a(\mathbb{E}_P[\bar x], \Lambda, t) + A_1(\mathbb{E}_P[\bar x], \Lambda,t) (x^{(p)}-\mathbb{E}_P[\bar x]) 
\end{equation}
\begin{equation}
b(x^{(p)}, \Lambda,t) 
\approx 
b(\mathbb{E}_P[\bar x], \Lambda,t) + B_1(\mathbb{E}_P[\bar x], \Lambda,t) (x^{(p)}-\mathbb{E}_P[\bar x]).
\end{equation}
Here, $A_1$ and $B_1 \in \mathbb{R}^{d \times d}$  are the Jacobian matrices of the drift and diffusion coefficients $a$ and $b$.

This expansion, however, is not practically useful, because it assumes that $\Lambda$ is available. A more practical expansion is constructed by approximating $\Lambda = \mathbb{E}_P[\psi(\bar x)]$ with $\Lambda_{\mathbb{E}_P} = \psi(\mathbb{E}_P[\bar x])$ (the approximation consists in changing the position of the expected value operator).

We now define ODEs for describing the evolution the variables $M$ and $\Sigma$, which aim to closely approximate the mean and the covariance matrix of the particle ensemble. 
For background theory on these approximations for classical SDEs, see for instance \mycite{rodriguez_statistical_1996} and \mycite{arnold_stochastic_1974}.
The variable $M$ approximates the mean $\mathbb{E}_P[\bar x] \in \mathbb{R}^d$ as
\begin{equation}
\begin{aligned}
\frac{dM}{dt} &= a(M,\psi(M)) + \frac{1}{2} Q \operatorname{Vec} (\Sigma), \qquad 
M(0) &= \mathbb{E}_P[\bar x(0)],
\end{aligned}
\label{moment_model_mean}
\end{equation}
where $Q$ is a matrix whose $j$-th row contains a flattened Hessian matrix of the $j$-the component of $a$ and where $\operatorname{Vec}$ is the vectorisation (flattening) operator.
The evolution of an approximation $\Sigma$ to the covariance matrix $\mathrm{Cov}[\bar x] \in \mathbb{R}^{d \times d}$ can be described with a matrix differential equation:
\begin{equation}
\begin{aligned}
\frac{d\Sigma}{dt} 
&= 
A_1(M,\psi(M),t)\Sigma 
+ \Sigma A_1(M,\psi(M),t)^T 
+ B_1 \Sigma B_1^T \\
& \quad + b(M,\psi(M),t) b(M,\psi(M),t)^{T} \\
\Sigma(0) &= \mathrm{Cov}_P[\bar x(0)].
\end{aligned}
\label{moment_model_covariance}
\end{equation}




\sloppy
\subsection{Coupling operators for multidmensional MC-moments Parareal}
\label{section_coupling_MC_moments}

\begin{definition}[MC-moments Parareal for SDEs of any dimension]
\label{definition_MC_moments_Parareal}
The micro variable $u$ is an ensemble of particles $u = \bar x$, and the macro variable $U$ contains its first moments (mean and covariance matrix) $U = [\mathbb{E}_P[u], \, \mathrm{Cov}_P[u]]$. We choose

\begin{itemize}
\item a fine propagator $\mathcal F$: a Monte Carlo simulation of the SDE \eqref{general_equation}

\item a coarse propagator $\mathcal C$: (a numerical simulation of) the moment model \eqref{moment_model_mean}-\eqref{moment_model_covariance}.

\item a restriction operator 
$\mathcal R( \bar x) 
:= 
[\mathbb{E}_P[\bar x], \, \mathrm{Cov}_P[\bar x]]$.

\item a matching operator $\mathcal M$ that takes as input a particle ensemble $\bar x$ and a pair of desired mean $M$ and covariance matrix $\Sigma$.
Let us assume that both $\mathrm{Cov}_P[\bar x]$ and $\Sigma$ are positive semidefinite (PSD).
Now define the matrices $V \in \mathbb R^{d \times d}$ and $Q \in \mathbb R^{d \times d}$ via the  Cholesky decompositions $\Sigma = V V^T$ and $\mathrm{Cov}_P[\bar x] = Q Q^T$. 

\begin{itemize}
\item If $Q$ is invertible, then we define $A = V Q^{-1}$, and 
\begin{equation}
\mathcal{M}([M, \, \Sigma], \bar x) 
:= 
A (\bar x - \mathbb{E}_P[\bar x]) + M
\label{definition_matching_operator}
\end{equation}

\item If $Q$ is not invertible, then the particles $\bar x$ are resampled from a standard normal distribution.
\end{itemize} 
 
\item a lifting operator $\mathcal L$, used in the zeroth micro-macro Parareal iteration is choosen to be matching with respect to the initial condition of the SDE $\mathcal{L}(U) := \mathcal{M}(U, \bar x(0) )$.
\end{itemize}
\end{definition}

If the drift and diffusion coefficients are affine functions in their first two arguments $M$ and $\Lambda = \phi(M)$, and if $\phi$ is a linear function as well, then the moment equations \eqref{moment_model_mean}-\eqref{moment_model_covariance} are an exact description of $\mathbb{E}_P[\bar x]$ and $\mathrm{Cov}_P[ \bar x]$. 
This fact indicates that the choice of $\mathcal R$ and $\mathcal C$ is sensible.


We now present a lemma that is important in the light of the exactness property of micro-macro Parareal (see property \ref{properties_micro_macro_Parareal}).
This lemma is proven in appendix \ref{proof_property_matching}.

\begin{lemma}[Micro-macro consistency of the choosen matching operator]
\label{lemma_matching_operator}
Let $\bar x$ be an ensemble of particles such that $\mathrm{Cov}_P[\bar x]$ is symmetric positive definite.
Let $U = \left[ \begin{matrix} M, \Sigma \end{matrix} \right]$ be a macro variable, with $\Sigma$ PSD.
Then the matching operator \eqref{definition_matching_operator} satisfies $\mathcal{R}\left( \mathcal M( U, u) \right) = U$ and $\mathcal{M} \left( \mathcal R (u), u \right) = u$.
\end{lemma}
\noindent


\subsection{Numerical experiment}

We illustrate the generalised MC-moments Parareal method from definition \ref{definition_MC_moments_Parareal} for a classical SDE without mean-field interaction
from the book by 
\mycite[Exercise 21.6]{roberts_model_2014}:
\begin{equation}
\begin{aligned}
dx &= f(x,y) dt \\
dy &= g(x,y) dt + \sigma dW
\end{aligned}
\qquad
\qquad 
\left[ 
\begin{matrix}
x(0) \\ y(0)
\end{matrix}
\right] 
= 
\left[ 
\begin{matrix}
1 \\ 1
\end{matrix}
\right],
\label{SDE_Roberts_definition}
\end{equation}
with $f(x,y) = \alpha x - x y$ and $g(x,y) = -y + x^2$.
Our aim is to simulate the SDE over the time interval $[0,  \, 10]$.
For this choice of initial codition, the solution is unimodal in the whole time domain. 
Since the noise is additive, the It\^{o} interpretation of \eqref{moment_model_Roberts} is the same as its Stratonovich interpretation.
For the stochastic simulation of the SDE, we use the Euler-Maruyama method (see e.g. \mycite{kloeden_platen_1999}) with a time step of $\Delta t = 0.02$ on a particle ensemble $\bar x$ with $10^5$ particles.

For the SDE \eqref{SDE_Roberts_definition}, the moment model, defined in \eqref{moment_model_mean}-\eqref{moment_model_covariance}, can be written as
\begin{equation}
\begin{aligned}
\frac{dM_x}{dt} &= f(M_x,M_y) + f_{xy}(M_x, M_y) C_{xy} \\
\frac{dM_y}{dt} &= g(M_x, M_y) C_{xx} + \frac{1}{2} g_{xx}(M_x, M_y)C_{xx}  \\
\frac{dC_{xx}}{dt} &= 2f_x(M_x,M_y)C_{xx} + 2f_y(M_x,M_y)C_{xy} \\
\frac{dC_{xy}}{dt} &= f_y(M_x,M_y)C_{yy} + (f_x(M_x,M_y) + g_y(M_x,M_y))C_{xy} + g_x(M_x,M_y)C_{xx} \\
\frac{dC_{yy}}{dt} &= 2g_y(M_x,M_y)C_{yy} + 2g_x(M_x,M_y)C_{xy} + \sigma^2,
\end{aligned}
\label{moment_model_Roberts}
\end{equation}
with
\begin{equation}
\left[
\begin{matrix}
M_x(0) \\
M_y(0) 
\end{matrix}
\right]
=  
\left[
\begin{matrix}
1 \\ 1
\end{matrix}
\right]
\qquad 
\qquad
\left[
\begin{matrix}
\Sigma_{xx}(0) & \Sigma_{xy}(0) \\
\cdot & \Sigma_{yy}(0)
\end{matrix}
\right]
=  
\left[
\begin{matrix}
0 & 0 \\
\cdot & 0 
\end{matrix}
\right].
\end{equation} 
and with $f_x = \alpha - y$, $f_y = -x$, $f_{xy}=-1$, $g_x = 2x$, $g_y = -1$ and $g_{xx} = 2$. We choose $\alpha=  1$. 
The moment ODE is numerically simulated with the forward Euler method with the same timestep, using the software by \mycite{rackauckas_differentialequationsjl_2017}.

\paragraph{Introductory experiment: Quality of the moment model.}
As a way to inspect the quality of the moment ODE, we plot in figure \ref{figure_Roberts_covar} a numerical solution to the moment ODE together with a numerical solution of the SDE. 
We observe that the quality of the moment model is not always excellent. 
The moment model gets more accurate as the noise level $\sigma$ goes to zero. 
\begin{figure}[H]
\includegraphics[width=0.5\textwidth]{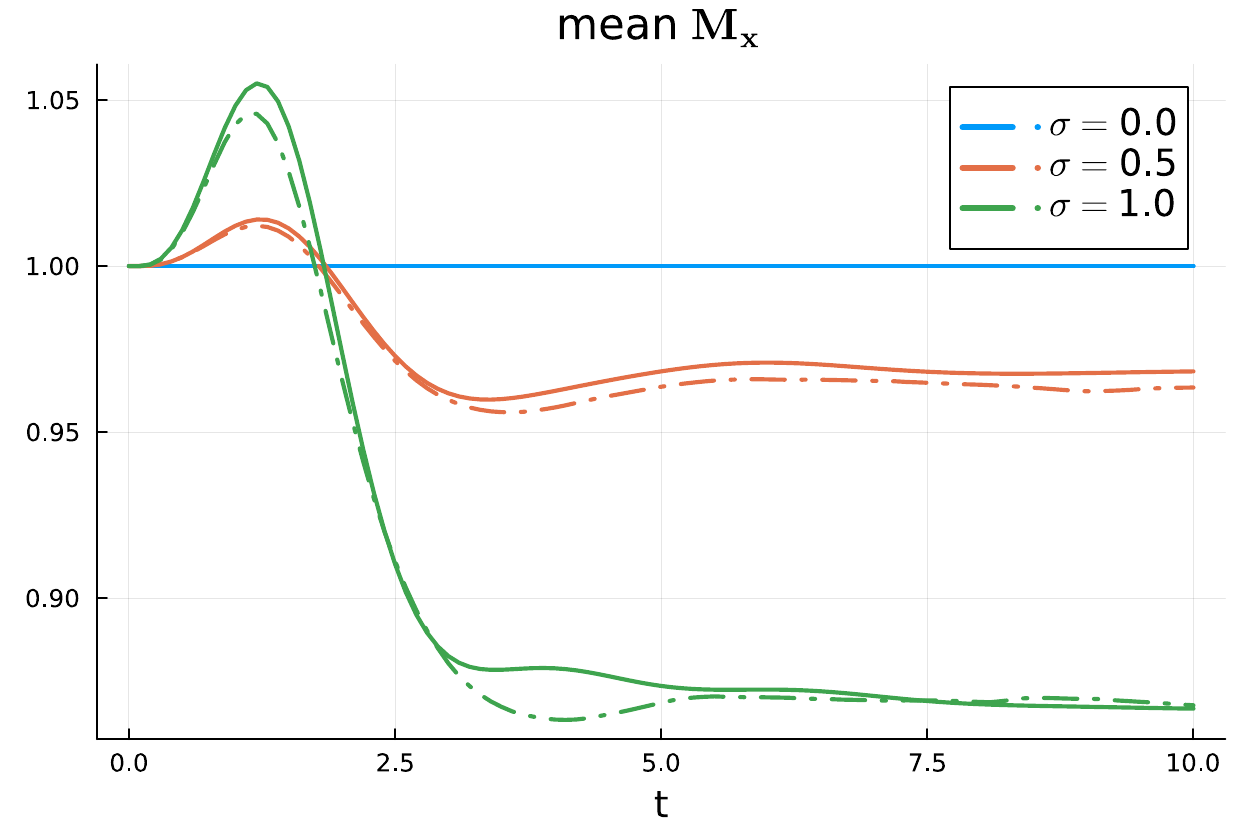}
\includegraphics[width=0.5\textwidth]{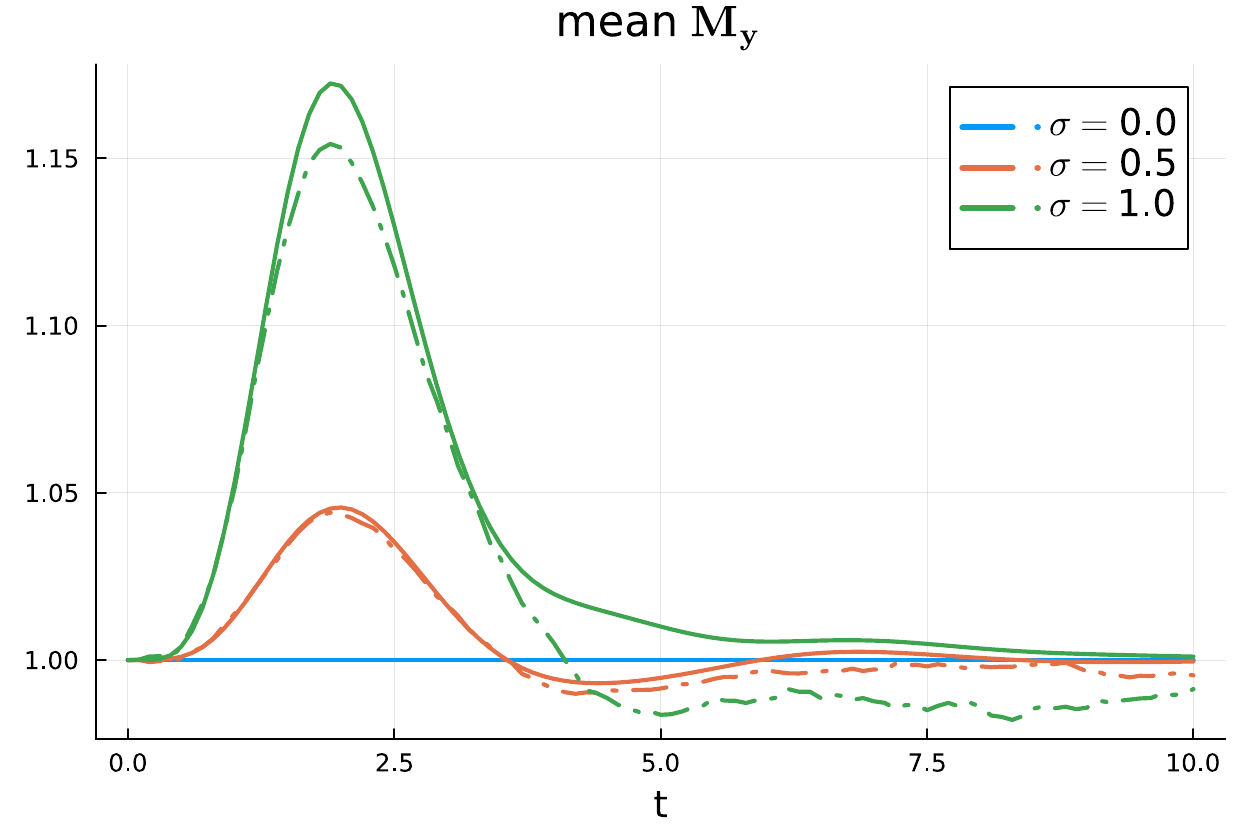}

\includegraphics[width=0.5\textwidth]{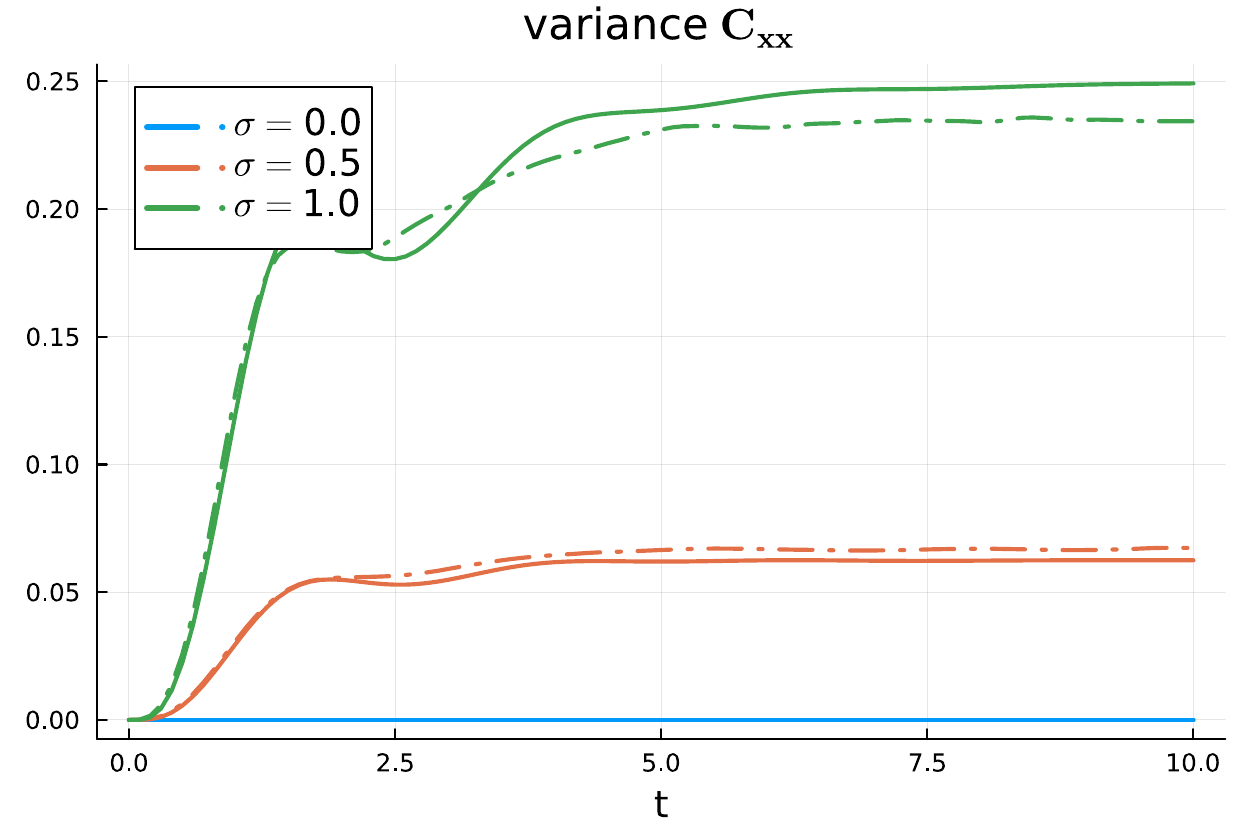}
\includegraphics[width=0.5\textwidth]{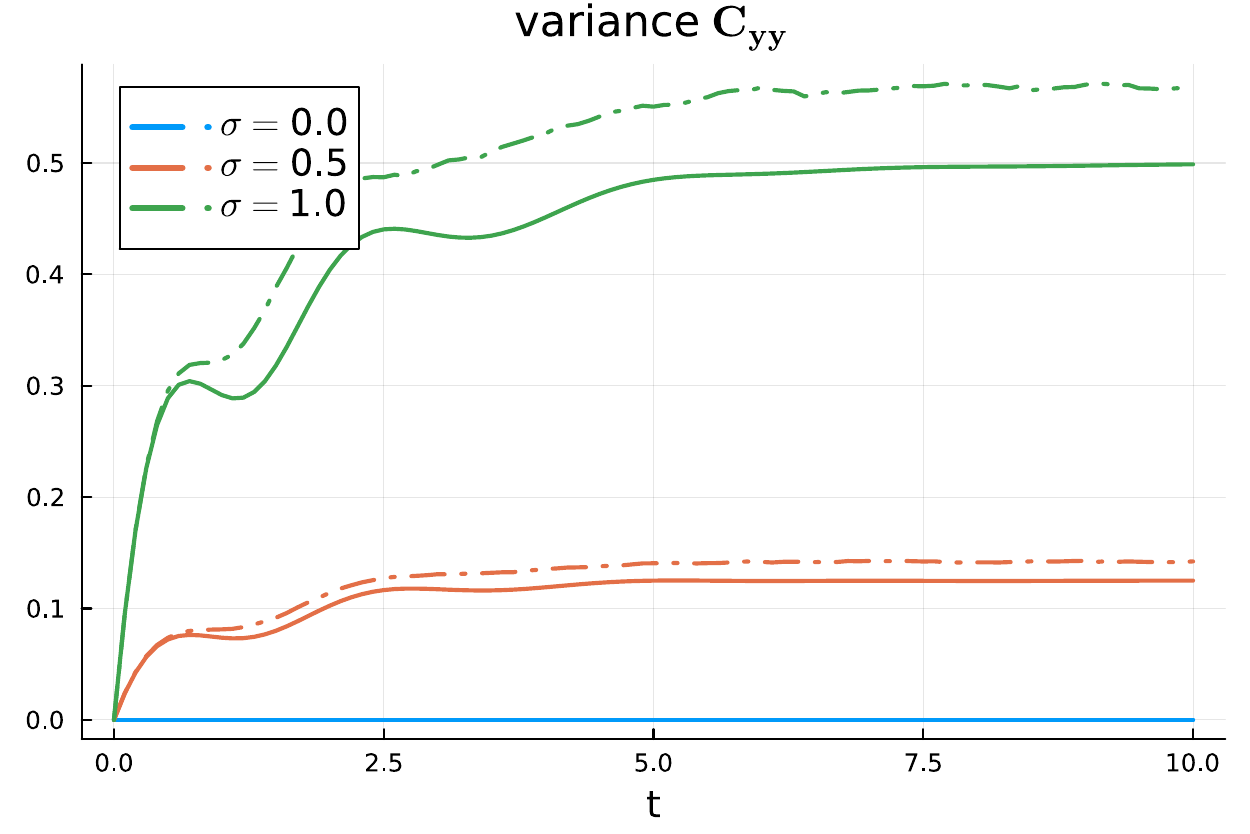}
\includegraphics[width=0.5\textwidth]{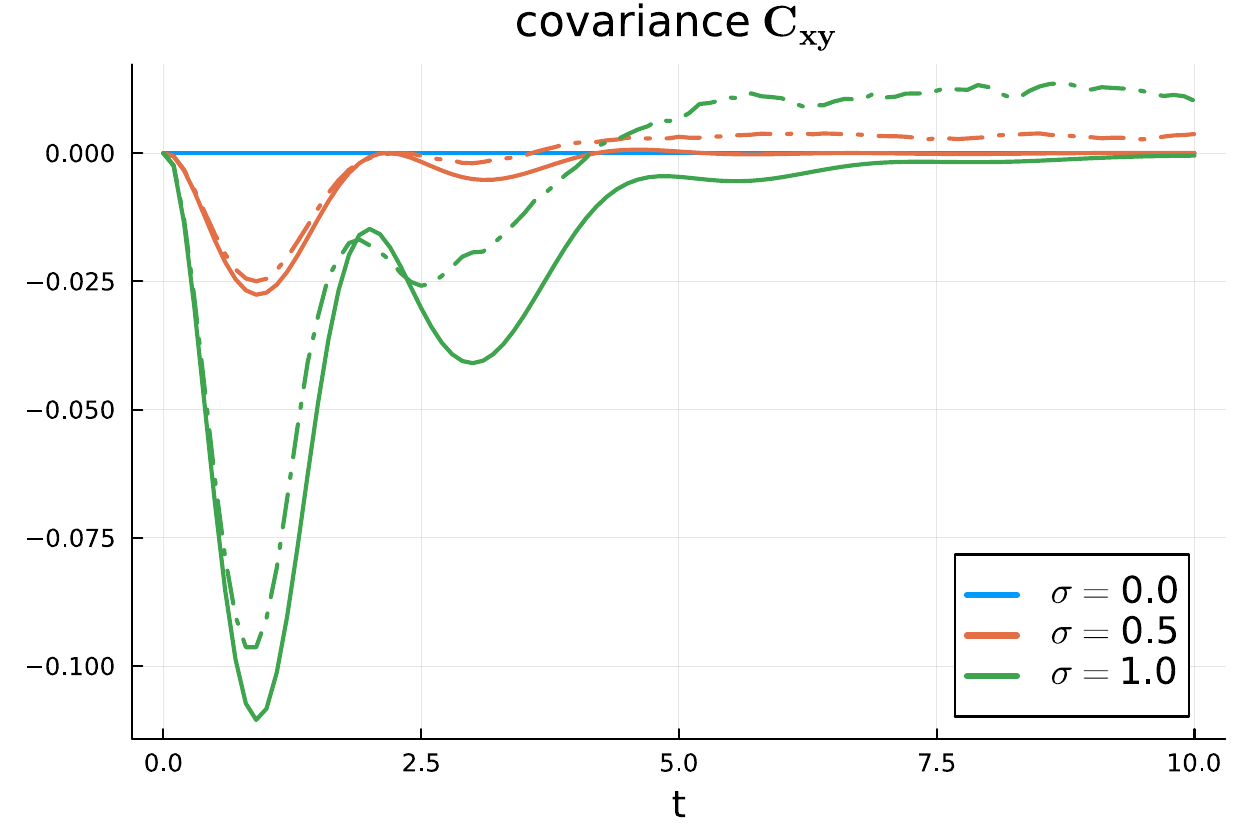}
\caption{Illustration of the moment model \eqref{moment_model_Roberts} as an approximation to the SDE \eqref{SDE_Roberts_definition}. The dashed lines correspond to a stochastic simulation of the SDE, the full lines correspond to the moment model. Different colors indicate different noise levels $\sigma$.}
\label{figure_Roberts_covar}
\end{figure}

\paragraph{Convergence of MC-moments Parareal}
In figure \ref{figure_Roberts_Parareal_convergence} we plot the relative error on each component of the mean and the variance, measured through an infinity-norm over time, as a function of the iteration number. 
The errors are computed using an average of 20 realisations, where in each realisation the fine Parareal solver uses one ensemble $\bar x$ of $P = 10^5$ coupled particles. 
We chose the number of Parareal subintervals $N = K= 10$ and a time interval $[0, 20]$. 
The noise level is set to $\sigma = 0.5$.
In figure \ref{figure_Roberts_covar_Parareal} we plot different Parareal iterates.  
The final iteration (for which $k=K=N$), which is indicated with a thick line, serves as the reference solution, and corresponds to the sequential simulation in figure \ref{figure_Roberts_covar}. 
Both in figures \ref{figure_Roberts_covar_Parareal} and \ref{figure_Roberts_Parareal_convergence} we conclude that convergence takes place. 
In other experiments, however, we have observed that the convergence is parameter dependent.
For instance, changing $\sigma$ and/or $\alpha$ may lead to non-monotonic and slower convergence than the convergence observed in figure \ref{figure_Roberts_Parareal_convergence}.

\begin{figure}[H]
\includegraphics[width=0.96\textwidth,trim={0.6cm 0 0 1.8cm 1.2cm},clip]{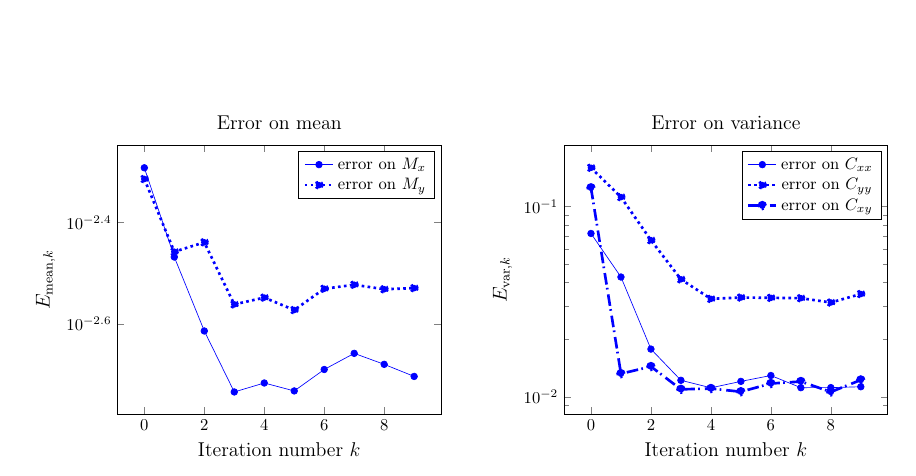}
\caption{Convergence of modified MC-moments Parareal for the multidimensional SDE \eqref{SDE_Roberts_definition}: relative error on mean and variance versus iteration number.}
\label{figure_Roberts_Parareal_convergence}
\end{figure}

\begin{figure}[H]
\includegraphics[width=0.5\textwidth]{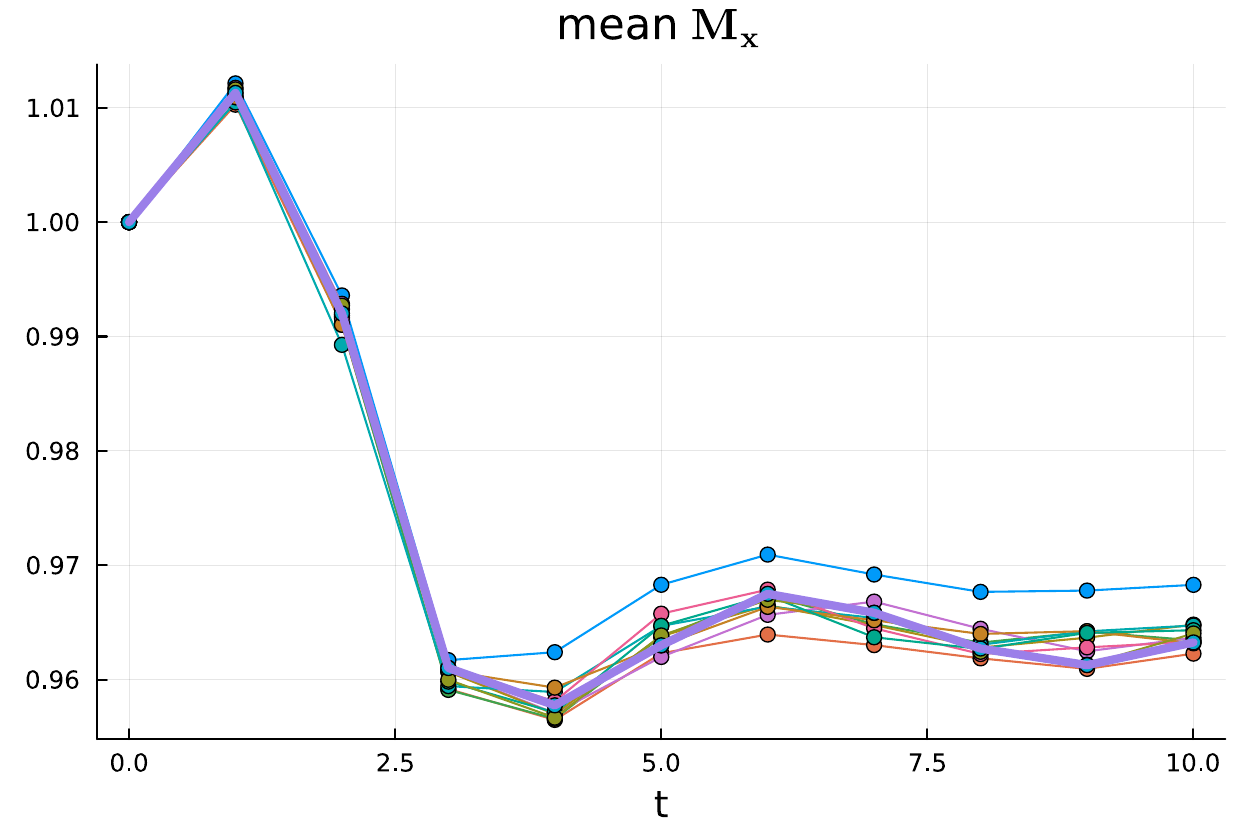}
\includegraphics[width=0.5\textwidth]{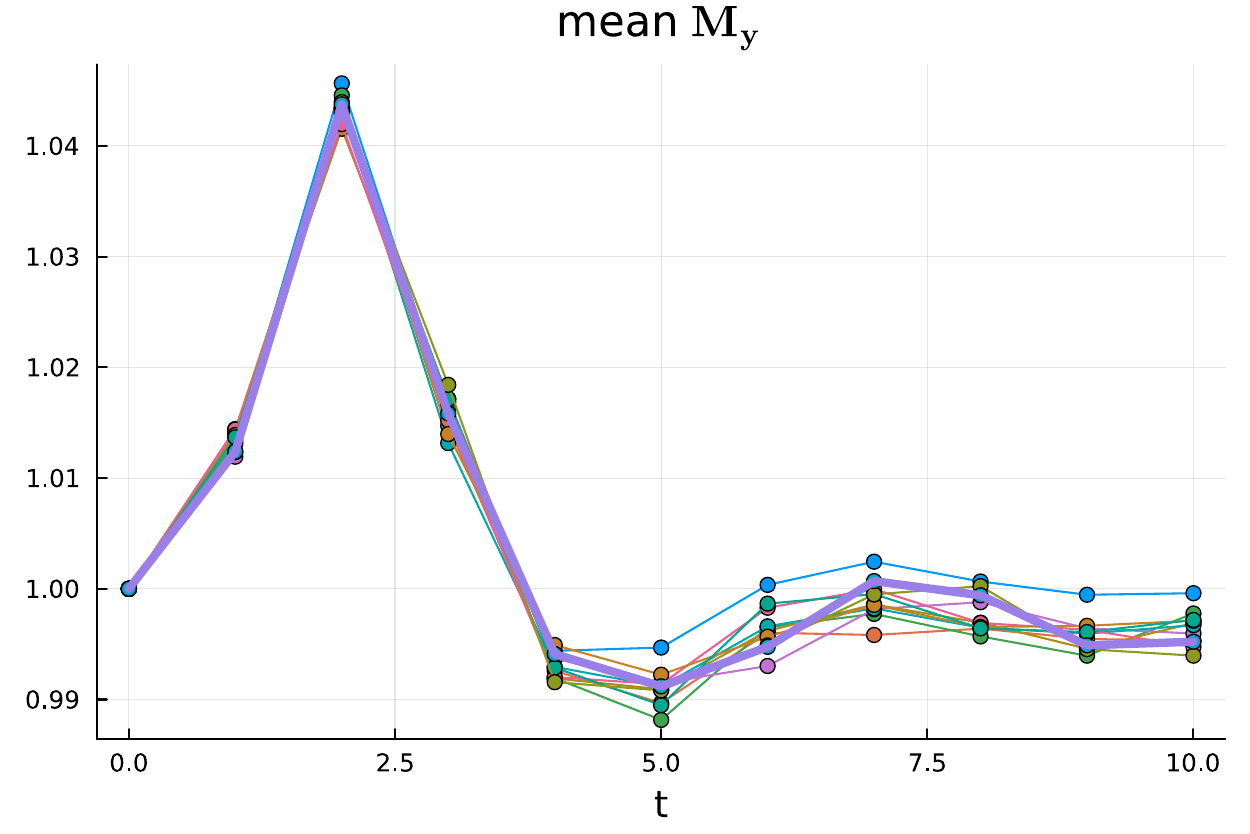}

\includegraphics[width=0.5\textwidth]{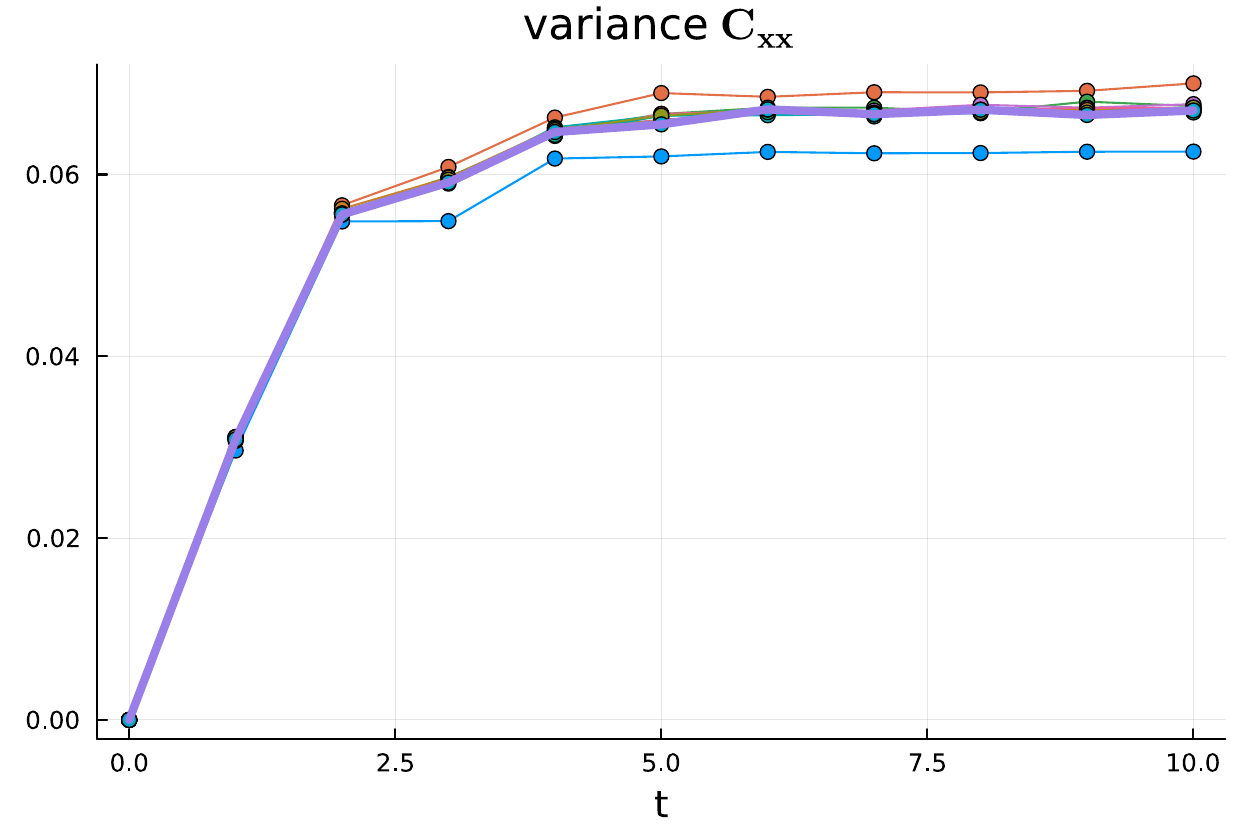}
\includegraphics[width=0.5\textwidth]{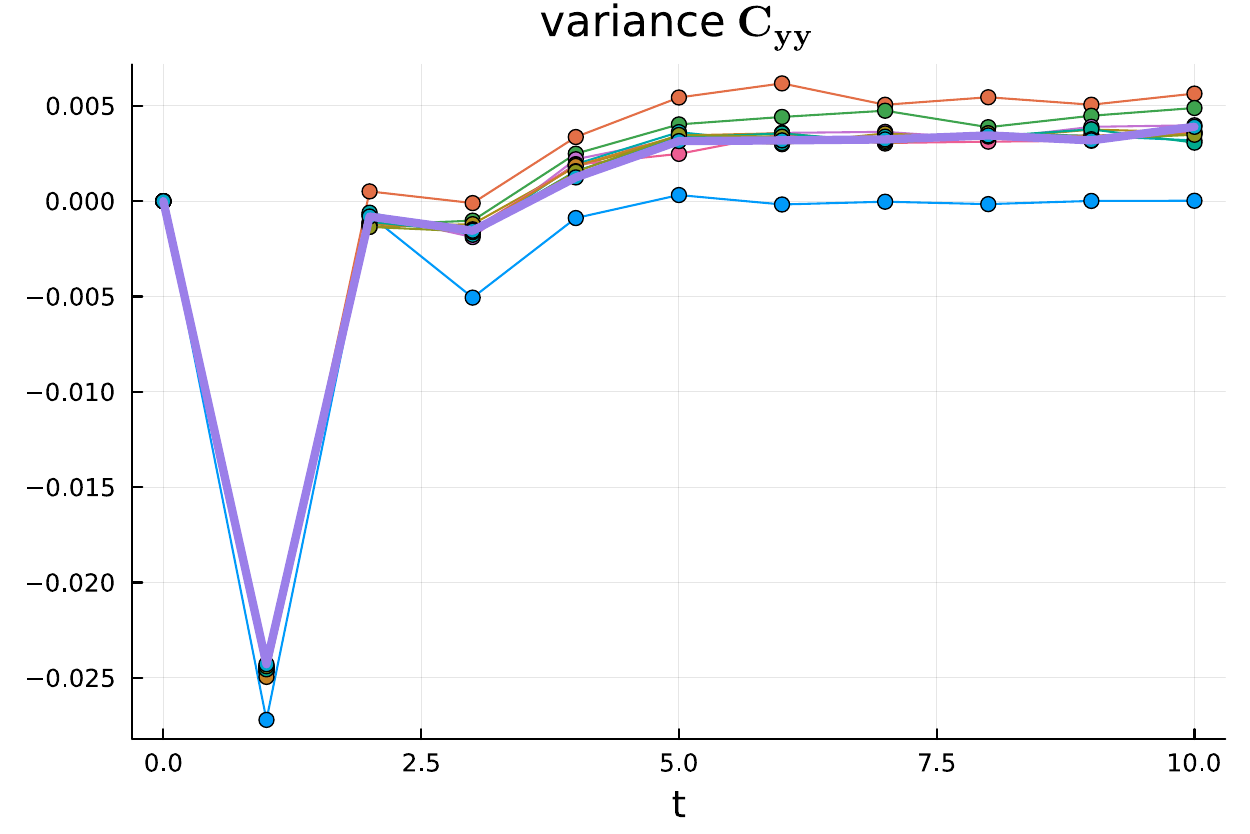}
\includegraphics[width=0.5\textwidth]{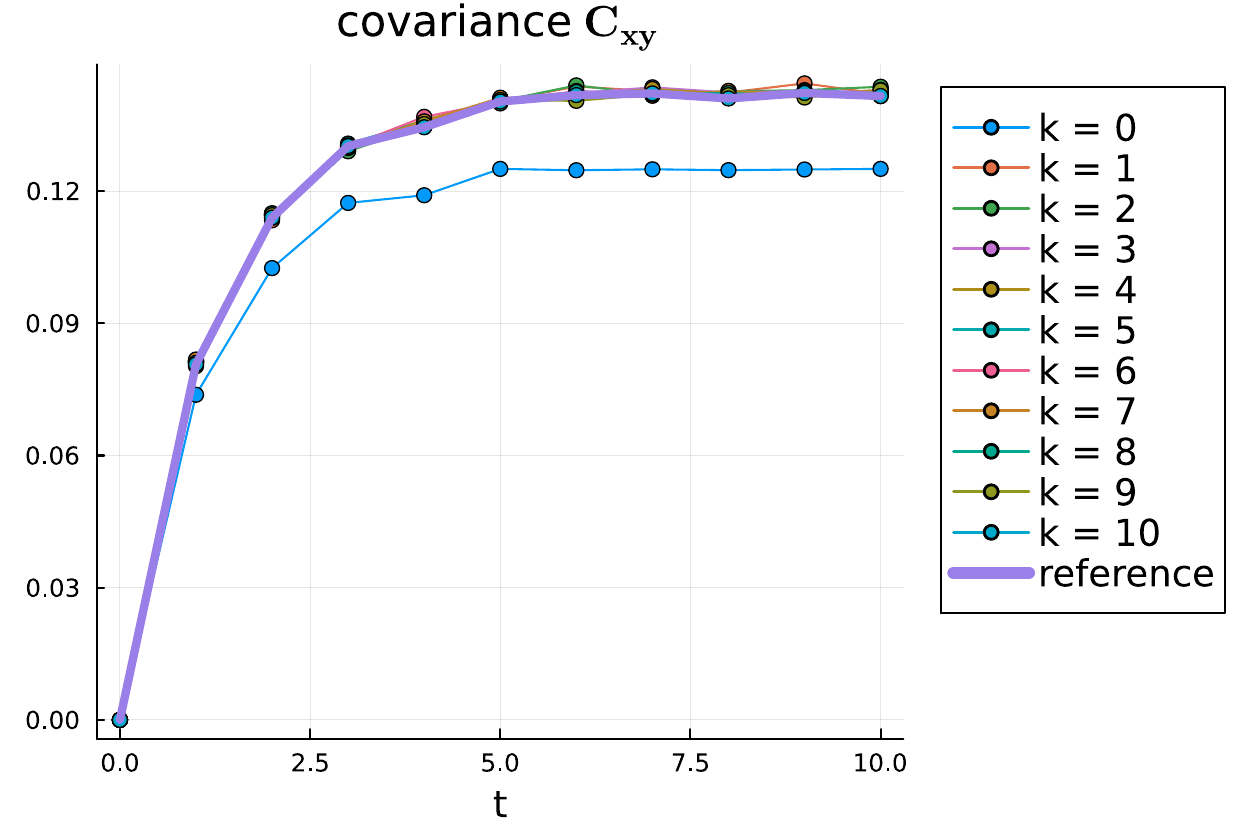}
\caption{Convergence of modified MC-moments Parareal for the multidimensional SDE \eqref{SDE_Roberts_definition}: illustration of the convergence of the moments towards the reference solution. The reference solution is the last iterate of MC-moments Parareal, which corresponds to a sequential simulation with the fine propagator.}
\label{figure_Roberts_covar_Parareal}
\end{figure}

\section{Conclusion}
In section \ref{section_micro_macro_ODE}, we presented a new convergence bound for micro-macro Parareal for a special linear multiscale ODE.
While our work focuses on a specific case of the multiscale ODEs considered by \mycite{Legoll2013}, our bound is a generalisation of an existing bound for scalar ODEs from \mycite{gander_analysis_2007}. 
It contains an extra term that decreases as the decay rate of the fast component in the ODE system increases.
In the future, it would be interesting te develop a convergence bound for a more general class of linear multiscale ODEs, where the bottom left element of $\mathcal A$ does not need to be zero. 
Also, as we considered only $x,y \in \mathbb R$, generalizations to higher dimensions and to more general nonlinear ODEs would be interesting.

In section \ref{section_micro_macro_SDE}, we extended a Parareal algorithm in which the fine solver is a Monte Carlo simulation of the SDE, and the coarse solver is an approximate ODE description of the first two statistical moments of the SDE. 
In the future, it would be interesting to perform tests with the algorithm on more examples, such as true McKean-Vlasov SDEs, and to study the influence of the form of different SDEs on the convergence.
Another path would be to augment the MC-moments Parareal method with a way to exploit other reduced models, such as the normal forms described in \mycite{roberts_model_2014}. 

In summary, we first studied micro-macro Parareal for ODEs, and then we visited a Parareal method for SDEs that exploits ODE descriptions.
This journey brought us first to ODEs, and via SDEs we returned to ODEs.

\paragraph{Acknowledgements.}
We sincerely thank the anonymous reviewers for their careful and thorough work. Their suggestions  have greatly ameliorated this text.

We thank Thibaut Lunet for discussions about convergence of micro-macro Parareal. We thank Thijs Steel for suggesting the Cholesky factorization in the definition of the matching operator \eqref{definition_matching_operator}. We also thank Arne Bouillon for his comments on an earlier version of this manuscript.

This project has received funding from the European High-Performance Computing Joint undertaking (Ju) under grant agreement No.~955701. The Ju receives
support from the European union’s Horizon 2020 research and innovation programme and Belgium, France, Germany, and Switzerland.
The authors declare no other conflicts of interest.

\bibliography{allshort}

\newpage
\appendix
\section{Proof of lemma \ref{solution_micro_macro_Parareal}}
\label{proof_micro_macro_error}
\begin{proof}\,
\begin{itemize}
\item First, we observe that, for the operators in definition \ref{definition_micro_macro_Parareal_ODEs}, it holds that 
for any $U$ and $u$, $\mathcal R (\mathcal M(U, u)) = U$ and
$\mathcal M ( \mathcal R (u), u) = u$.
Thus, as a result of property \ref{properties_micro_macro_Parareal}, we have 
$U^k_n = x^k_n$ for all $0 \leq n \leq N$.

\item In the zeroth iteration 
it holds that
\begin{equation}
\begin{aligned}
U^0_{n+1} &= G U^0_n \\
u^{0}_{n+1}
&= 
\mathcal L (U^0_{n+1})
=
\left[ \begin{matrix} 
G U^0_n \\ 0 \end{matrix} \right] 
=
\left[ \begin{matrix} 
G x^0_n \\ 0 \end{matrix} \right] 
= 
\mathcal B u^{0}_{n},
\end{aligned}
\end{equation}

\item In subsequent iterations, it holds that
\begin{equation}
\begin{aligned}
U^{k+1}_{n+1} 
&= 
\mathcal C (U^{k+1}_n) 
+ \mathcal{R} ( \mathcal F (x^k_n)) 
- \mathcal{C} (U^{k}_n) \\
&= 
G U^{k+1}_n 
+ F x^k_n + b y^k_n 
- G U^{k}_n 
\\
u^{k+1}_{n+1}
&= 
\mathcal M \left( 
U^{k+1}_{n+1}, \mathcal F(x^k_n) \right) 
= 
\mathcal M \left( 
U^{k+1}_{n+1}, \left[
\begin{matrix}
F x^k_n + by^n \\ d y^k_n
\end{matrix}
\right] \right) \\
&= 
\left[
\begin{matrix}
U^{k+1}_{n+1} \\ d y^k_n
\end{matrix}
\right]
= 
\left[ \begin{matrix} 
G U^{k+1}_n + F x^k_n + b y^k_n - G U^{k}_n \\ d y^k_n \end{matrix} \right] \\
&= 
\left[ \begin{matrix} 
G x^{k+1}_n + F x^k_n + b y^k_n - G x^{k}_n \\ d y^k_n \end{matrix} \right] \\
&= 
(\mathcal A - \mathcal B) u^k_n + \mathcal B u^{k+1}_n,
\end{aligned}
\label{Parareal_with_coupling_operators}
\end{equation}
where $d = e^{\delta \Delta t}$
\end{itemize}
\end{proof}

\section{Solution of a linear recursion}
\label{appendix_matrix_recursion}
\begin{lemma}[Solution of a matrix recursion]
\label{lemma_matrix_recursion}
Let $B, L \in \mathbb{R}^{d \times d}$ be matrices, let $\epsilon_0 \in \mathbb{R}^d$ be a vector and $b \in \mathbb R$ a scalar, then this iteration for $\mathbf e^{x,(k)} \in \mathbb{R}^d$,
\begin{equation}
\mathbf e^{x,(k)} = B \mathbf e^{(k-1)} + b M L^{k-1} \epsilon_0,
\label{statement_matrix_recursion}
\end{equation}
has the solution 
\begin{equation}
\begin{aligned}
\mathbf e^{x,(k)} 
&= 
B^{k} 
\mathbf e^{(0)} 
+
b 
\sum_{i=0}^{k-1} 
B^{i} 
M L^{k-1-i}
\epsilon_0.
\end{aligned}
\label{solution_matrix_recursion}
\end{equation}
\end{lemma}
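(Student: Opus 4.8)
The plan is to prove this by induction on $k$, after first rewriting the recursion in a form where the iteration matrix is isolated. Since $A$ is invertible by hypothesis, I would left-multiply the recurrence \eqref{statement_matrix_recursion} by $A^{-1}$ to obtain the explicit one-step map
\begin{equation}
e^{(k)} = A^{-1} B \, e^{(k-1)} + A^{-1} b \, \epsilon^{k-1} \epsilon_0,
\end{equation}
and I would abbreviate $P = A^{-1}B$ so that the iteration reads $e^{(k)} = P e^{(k-1)} + A^{-1}b\,\epsilon^{k-1}\epsilon_0$. The claimed formula \eqref{solution_matrix_recursion} is then simply $e^{(k)} = P^k e^{(0)} + \sum_{i=0}^{k-1} P^i A^{-1} b \, \epsilon^{k-1-i}\epsilon_0$, which is the target of the induction.

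For the base case $k=1$, I would check that \eqref{solution_matrix_recursion} collapses to $e^{(1)} = P e^{(0)} + A^{-1}b\,\epsilon_0$, since the sum then contains only the single index $i=0$; this matches the one-step map evaluated at $k=1$. For the inductive step I would assume the formula holds at index $k$ and substitute it into $e^{(k+1)} = P e^{(k)} + A^{-1}b\,\epsilon^{k}\epsilon_0$. Distributing $P$ across the inductive hypothesis produces $P^{k+1}e^{(0)}$ together with the shifted sum $\sum_{i=0}^{k-1} P^{i+1} A^{-1} b \, \epsilon^{k-1-i}\epsilon_0$, plus the leftover inhomogeneous term $A^{-1}b\,\epsilon^{k}\epsilon_0$. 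Reindexing the shifted sum via $j=i+1$ rewrites it as $\sum_{j=1}^{k} P^{j} A^{-1} b \, \epsilon^{k-j}\epsilon_0$, and the standalone term is exactly the missing $j=0$ summand (using $P^0 = I$ and $\epsilon^{k-0}=\epsilon^k$). Merging the two gives $\sum_{j=0}^{k} P^{j} A^{-1} b \, \epsilon^{k-j}\epsilon_0$, so that $e^{(k+1)} = P^{k+1}e^{(0)} + \sum_{j=0}^{k} P^{j} A^{-1} b \, \epsilon^{k-j}\epsilon_0$, which is precisely \eqref{solution_matrix_recursion} at index $k+1$, closing the induction.

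I do not expect any serious obstacle; the only step requiring genuine care is the index shift, where I must confirm that absorbing the isolated inhomogeneous term as the $j=0$ contribution correctly accounts for both the matrix power and the exponent of $\epsilon$. An equivalent route would be to unroll the recursion directly and telescope, but the induction keeps the bookkeeping cleanest. It is worth emphasising that invertibility of $A$ is used only to isolate $P$ and to make sense of $A^{-1}b$, and that no commutation or structural hypothesis linking $A$ and $B$ is needed — which is exactly what makes this recursion lemma usable in the non-commuting setting flagged earlier in the paper.
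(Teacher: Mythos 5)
Your proof is correct and follows essentially the same route as the paper's: left-multiply by $A^{-1}$ to isolate the one-step map $e^{(k)} = A^{-1}B\,e^{(k-1)} + A^{-1}b\,\epsilon^{k-1}\epsilon_0$ and then work out the recursion, which the paper states in one line and you make rigorous via induction with the index shift handled explicitly. No gap; your closing observation that no commutation hypothesis on $A$ and $B$ is needed is also accurate and is precisely why the lemma applies in the paper's setting.
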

\begin{proof}
The correctness of equation \eqref{solution_matrix_recursion} can be verified by filling it in equation \eqref{statement_matrix_recursion}. Alternatively, equation \eqref{solution_matrix_recursion} can be derived by solving the recursion that arises from equation \eqref{statement_matrix_recursion}.
\end{proof}


\section{Proof of lemma \ref{lemma_commuting_matrices}}
\label{appendix_proof_commuting_matrices}

\paragraph{Commutativity of $M^{-1}$ and $C$}
First we give an expression for $M^{-1}$ (the matrix $M$ is defined in equation \eqref{definition_matrices_M_and_C}). It as a strictly lower triangular Toeplitz matrix (see \mycite{gander_analysis_2007}):
\begin{equation}
M^{-1} = 
\left[
\begin{matrix}
I & 0 & 0 & \hdots & & 0\\
\mathcal{B} & I & 0 & \hdots & & 0\\
\mathcal{B}^2 & \mathcal{B} & I & \hdots & & 0 \\
\mathcal{B}^3 & \mathcal B^2 & \beta & \hdots & & 0\\
\vdots & & & & & \vdots \\
\mathcal{B}^{N-1} & \mathcal{B}^{N-2} & \mathcal{B}^{N-3} & \hdots & \mathcal{B} & I \\
\end{matrix}
\right].
\end{equation}
Let $\gamma = \mathcal A - \mathcal B$, then we have  that $M^{-1} C$ is a lower-triangular matrix
\begin{equation}
M^{-1} C = 
\left[
\begin{matrix}
0 & 0 & 0 & \hdots & & 0\\
\gamma & 0 & 0 & \hdots & & 0\\
\mathcal{B} \gamma & \mathcal{B} \gamma  & 0 & \hdots & & 0 \\
\mathcal{B}^2 \gamma & \mathcal{B} \gamma & \gamma & \hdots & & 0\\
\vdots & & & & & \vdots \\
\mathcal{B}^{N-2} \gamma & \mathcal{B}^{N-3} \gamma & \mathcal{B}^{N-4}\gamma & \hdots & \gamma & 0 \\
\end{matrix}
\right]
\end{equation}
On the other hand, we have that $C M^{-1}$ is a lower-triangular matrix
\begin{equation}
C M^{-1} = 
\left[
\begin{matrix}
0 & 0 & 0 & \hdots & & 0\\
\gamma & 0 & 0 & \hdots & & 0\\
\gamma \mathcal{B} & \gamma & 0 & \hdots & & 0 \\
\gamma \mathcal{B}^2 & \gamma \mathcal{B} & \gamma & \hdots & & 0\\
\vdots & & & & & \vdots \\
\gamma \mathcal{B}^{N-2} & \gamma \mathcal{B}^{N-3} & \gamma \mathcal{B}^{N-4} & \hdots & \gamma & 0 \\
\end{matrix}
\right]
\end{equation}
If $\gamma$ and $\beta$ do commute, then $M^{-1}$ and $C$ are commutative.

\paragraph{Proof of other statements} 
The fact that $M^{-1} C = HD$ can be readily checked.
Similarly, it can be verified that $C M^{-1} = DH$.
Thus, if $M^{-1} C = C M^{-1}$, then $HD = DH$. In words, if $M^{-1}$ and $C$ commute, so do $H$ and $D$.

\section{Proof of lemma \ref{lemma_matching_operator}}
\label{proof_property_matching}
\begin{proof}
We first prove that, under the conditions of lemma \ref{lemma_matching_operator}, it is true that $\mathcal{R}\left( \mathcal M( U, u) \right) = U$. 
Let $X$ be a random variable $X \in \mathbb R^d$ with $\mathrm{Cov}[X] = Q Q^T$ and $Q$ invertible, $M \in \mathbb R^d$ be a vector and $\Sigma \in \mathbb R^{d \times d}$ a matrix with $\Sigma = V V^T$, 
Then we need to prove that
\begin{equation}
\begin{aligned}
\mathbb{E}_P[ \mathcal{M}([M, \, \Sigma], X) ] &= M, \\
\mathrm{Cov}_P[ \mathcal{M}([M, \, \Sigma], X) ] &= \Sigma.
\end{aligned}
\label{property_matching_operator}
\end{equation}
 We name $Y = \mathcal{M}([M, \, \Sigma], X)$.
\begin{itemize}
\item Using the linearity of the expectation, we have
\begin{equation}
\mathbb{E}_P[Y] = \mathbb{E}_P[AX] - A \mathbb{E}_P [X] + M = M.
\label{proof_for_mean}
\end{equation}

\item Using that for a random variable $X \in \mathbb R^d$, with an arbitrary vector $b \in \mathbb R^d$ and matrix $A \in \mathbb R^{d \times d}$, $\mathrm{Cov}_P[A(X-b)] = A \mathrm{Cov}_P[X]A^T$, we obtain
\begin{equation}
\begin{aligned}
&\mathrm{Cov}_P[Y] 
= \mathrm{Cov}_P[A(X-\mathbb{E}_P[X])+M]
= A \mathrm{Cov}_P [X] A^T \\
&= A Q Q^T A^T = VV^T,
\end{aligned}
\label{proof_for_variance}
\end{equation} 
where we used the definition $A = V Q^{-1}$.
\end{itemize}
Equations \eqref{proof_for_mean} and \eqref{proof_for_variance} prove equation \eqref{property_matching_operator}.

\noindent
Now we prove that $\mathcal{M} \left( \mathcal R (u), u \right) = u$. In this case, $A = V Q^{-1} = I$ and $M = \mathbb{E}_P[\bar x])$.
\begin{equation}
\begin{aligned}
\mathcal{M}([\mathbb{E}_P[\bar x], \, \mathrm{Cov}_P[\bar x]], \bar x)  
= A (\bar x - \mathbb{E}_P[\bar x]) + M 
= \bar x.
\end{aligned}
\end{equation}
This ends the proof.
\end{proof}


\end{document}